\def\BBox{\kern  -0.2cm\hbox{\vrule width 0.2cm height 0.2cm}}
\newtheorem{remark}{Remark}
\newtheorem{teo}{Theorem}[section]
\newtheorem{coro}[teo]{Corollary}
\newtheorem{lema}[teo]{Lemma}
\theoremstyle{definition}
\theoremstyle{remark}
\title{On two-quotient strong starters for $\mathbb{F}_q$}
\author{
Carlos A. Alfaro\thanks{carlos.alfaro@banxico.org.mx}\\{\small  Banco de México}\\[1ex]\\Christian Rubio-Montiel\thanks{christian@cs.cinvestav.mx}
\\{\small UMI LAFMIA}%
\\
{\small CINVESTAV-IPN}\\[1ex]\\
Adrián Vázquez-Ávila\thanks{adrian.vazquez@unaq.edu.mx}\\
{\small Subdirección de Ingeniería y Posgrado}\\
{\small Universidad Aeronáutica en Querétaro}\\
}
\date{}
\begin{document}
\maketitle
\begin{abstract}
Let $G$ be a finite additive abelian group of odd order $n$, and let $G^*=G\setminus\{0\}$ be the set of non-zero elements. A \emph{starter} for $G$ is a set $S=\{\{x_i,y_i\}:i=1,\ldots,\frac{n-1}{2}\}$ such that
\begin{enumerate}
	\item  $\displaystyle\bigcup_{i=1}^{\frac{n-1}{2}}\{x_i,y_i\}=G^*$, and
	\item $\{\pm(x_i-y_i):i=1,\ldots,\frac{n-1}{2}\}=G^*$.
\end{enumerate}
Moreover, if $\left|\left\{x_i+y_i:i=1,\ldots,\frac{n-1}{2}\right\}\right|=\frac{n-1}{2}$, then $S$ is called a \emph{strong starter} for $G$. A starter $S$ for $G$ is a \emph{$k$ quotient starter} if there exists $Q\subseteq G^*$ of cardinality $k$ such that $y_i/x_i\in Q$ or $x_i/y_i\in Q$, for $i=1,\ldots,\frac{n-1}{2}$. In this paper, we give examples of two-quotient strong starters for $\mathbb{F}_q$, where $q=2^kt+1$ is a prime power with $k>1$ a positive integer and
$t$ an odd integer greater than 1.
\end{abstract}

\textbf{Keywords.} Strong starters, two-quotient starters, quadratic residues.

\section{Introduction}

Strong starters were first introduced by Mullin and Stanton in \cite{MR0234587} in constructing of Room squares. Starters and strong starters have been useful to construct many combinatorial designs such as Room cubes \cite{MR633117}, Howell designs \cite{MR728501,MR808085}, Kirkman triple systems \cite{MR808085,MR0314644}, Kirkman squares and cubes \cite{MR833796,MR793636},  and factorizations of complete graphs \cite{MR0364013,MR2206402,MR1010576,MR623318,MR685627}. Moreover, there are some interesting results on strong starters for cyclic groups \cite{MR808085} and for finite abelian groups \cite{MR1010576,MR1044227}.

A starter $S$ is a \emph{$k$ quotient starter} if there exists $Q\subseteq G^*$ of cardinality $k$ such that $y_i/x_i\in Q$ or $x_i/y_i\in Q$, for $i=1,\ldots,\frac{n-1}{2}$, see \cite{dinitz1984}. In particular if $k=1$ the starter $S$ is called \emph{one-quotient} starer for $G$. In fact, an first example of a one-quotient strong starter $S$ was given in \cite{MR0249314} Lemma 1. Further information about quotient starters in \cite{dinitz1984}.

Let $QR(q)$ and $NQR(q)$ denote the set of quadratic residues and the set of non-quadratic residues of the $\mathbb{F}_q^*=\mathbb{F}_q\setminus\{0\}$, respectively. In this work, we prove the following: 

\begin{teo}[Main Theorem]\label{thm:main}
Let $q=2^kt+1$ be a prime power with $k>1$ a positive integer and
$t$ an odd integer greater than 1. Then there exists a strong starter $S$ for $\mathbb{F}_q$ which satisfies
$\{a,b\}\in S$ with $a\in QR(q)$ and $b\in NQR(q)$. Furthermore there exists two different elements $\beta_1,\beta_2\in NQR(q)$ such that for every $\{a,b\}\in S$, with $a\in QR(q)$ and $b\in NQR(q)$, we have that $b/a\in\{\beta_1,\beta_2\}$.
\end{teo}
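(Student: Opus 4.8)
The plan is to realize $S$ entirely by \emph{mixed pairs} built from two multipliers. Since $q = 2^k t + 1$ with $k > 1$, the group $\mathbb{F}_q^*$ is cyclic of even order $2^k t$ with $4 \mid q-1$, so $-1 \in QR(q)$; in particular $-QR(q) = QR(q)$. This is exactly why a \emph{single} quotient cannot work: for a one-quotient starter $\{a,\beta a\}_{a\in QR(q)}$ the $\pm$-differences $\pm(1-\beta)a$ all lie in the one coset $(1-\beta)QR(q)$, which cannot exhaust $\mathbb{F}_q^*$. I would therefore fix $\beta_1,\beta_2\in NQR(q)$ and a partition $QR(q)=A_1\sqcup A_2$, and set
$$ S = \{\{a,\beta_1 a\}: a\in A_1\}\cup\{\{a,\beta_2 a\}: a\in A_2\}. $$
Every pair is then mixed (a residue with a non-residue, as $\beta_i\in NQR(q)$) and every quotient $b/a$ equals $\beta_1$ or $\beta_2$, yielding the two-quotient conclusion once $\beta_1\neq\beta_2$.

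Next I would translate the three defining conditions into algebra. The covering condition is $\beta_1 A_1\sqcup\beta_2 A_2 = NQR(q)$. For the difference condition the $\pm$-differences from $A_i$ fill out $(1-\beta_i)\big(A_i\cup(-A_i)\big)$; a size count (each side has $\tfrac{q-1}{2}$ pairs) forces $A_i\cap(-A_i)=\emptyset$, so each $A_i$ has $\tfrac{q-1}{4}$ elements, $A_1$ is a transversal of the pairs $\{a,-a\}$ in $QR(q)$, and $A_2=-A_1$. With this, condition (2) collapses to
$$ (1-\beta_1)QR(q)\ \sqcup\ (1-\beta_2)QR(q) = \mathbb{F}_q^*, $$
i.e. to the clean requirement that \emph{exactly one} of $1-\beta_1,\,1-\beta_2$ be a quadratic residue. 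Substituting $A_2=-A_1$ into the covering condition and using $NQR(q)=\beta_1 A_1\sqcup(-\beta_1)A_1$, condition (1) becomes the single demand that $A_1$ be invariant under multiplication by $\rho:=\beta_2/\beta_1\in QR(q)$.

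It remains to build such an $A_1$, and this is where $t>1$ enters. I would choose $\beta_1,\beta_2$ so that $\rho=\beta_2/\beta_1$ has \emph{odd} order $d>1$; such $\rho$ exists precisely because the odd part $t$ of $|\mathbb{F}_q^*|$ exceeds $1$, and this guarantees $\beta_1\neq\beta_2$. Since $\langle\rho\rangle$ has odd order, $-1\notin\langle\rho\rangle$, so negation acts without fixed cosets on the $\langle\rho\rangle$-cosets of $QR(q)$, pairing them as $C\leftrightarrow -C$. Picking one coset from each such pair produces an $A_1$ that is simultaneously a $\pm$-transversal of $QR(q)$ and $\langle\rho\rangle$-invariant, so conditions (1) and (2) hold as soon as $\beta_1,\beta_2$ are placed so that exactly one of $1-\beta_1,\,1-\beta_2$ is a residue.

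The remaining and hardest point is the \emph{strong} condition. With $A_2=-A_1$ the sums are $(1+\beta_1)A_1$ and $-(1+\beta_2)A_1$, and these $\tfrac{q-1}{2}$ values are distinct exactly when the two $\langle\rho\rangle$-invariant sets $(1+\beta_1)A_1$ and $-(1+\beta_2)A_1$ are disjoint, equivalently $-(1+\beta_1)/(1+\beta_2)\notin A_1 A_1^{-1}$. Unlike covering and difference, this does not reduce to membership in a fixed coset, so I expect it to be the main obstacle: one must exhibit an explicit pair $\beta_1,\beta_2$ (for instance drawn from prescribed quartic or higher $2$-power residue classes, so that the residue requirement on $1-\beta_i$ is automatic) and then verify the disjointness directly, either by a coset computation or, in the generic range of $q$, by a character-sum estimate. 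Note that $k>1$ is used throughout via $-1\in QR(q)$, while $t>1$ is what makes two genuinely distinct quotients available; both hypotheses are thus essential to the construction.
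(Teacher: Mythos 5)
Your skeleton coincides with the paper's construction: the paper takes $\beta_1\in NQR(q)$, a second multiplier $-\beta_2$ with $\beta_2\in\beta_1\hat{C}_0$ (so the ratio of the two multipliers lies in the odd-order subgroup $C_0$, your $\langle\rho\rangle$), and uses $A_1=\bigcup_{j}C_j$, one coset from each negation-pair of cosets of $C_0$ in $QR(q)$ --- precisely your $\langle\rho\rangle$-invariant $\pm$-transversal. Your forcing argument ($A_2=-A_1$, $|A_1|=\frac{q-1}{4}$) and your reduction of the difference condition to ``exactly one of $1-\beta_1$, $1-\beta_2$ is a residue'' are correct, and after the sign change $\beta_2\mapsto-\beta_2$ this is the paper's hypothesis $(\beta_1-1)(\beta_2+1)\in NQR(q)$. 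But you go astray at the strong condition, in two ways. First, your claim that the disjointness of the sum sets ``does not reduce to membership in a fixed coset'' is wrong: since $k>1$ gives $-1\in QR(q)$ and $A_1\subseteq QR(q)$, the sets $(1+\beta_1)A_1$ and $-(1+\beta_2)A_1$ each lie in a \emph{single} coset of $QR(q)$ in $\mathbb{F}_q^*$ (that of $1+\beta_1$, resp.\ of $1+\beta_2$), and your own criterion $-(1+\beta_1)/(1+\beta_2)\notin A_1A_1^{-1}$ is implied outright by the class condition $(1+\beta_1)(1+\beta_2)\in NQR(q)$, because $A_1A_1^{-1}\subseteq QR(q)$. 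This is exactly how the paper disposes of the strong condition, via the hypothesis $(\beta_1+1)(\beta_2-1)\in NQR(q)$ in its notation; no character-sum estimate or coset-by-coset disjointness check is needed, and the sum condition is perfectly parallel to the difference condition you did reduce.

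Second, and decisively: after this reduction the entire theorem rests on exhibiting $\beta_1\in NQR(q)$ and $\beta_2\in\beta_1\hat{C}_0$ satisfying \emph{both} class conditions simultaneously, and you explicitly leave that step open (``one must exhibit an explicit pair \ldots verify the disjointness directly \ldots by a character-sum estimate''). That existence statement is Lemma \ref{coro:completo}, which is the real content of the paper, and it is proved there elementarily: the cyclotomic numbers of order two guarantee runs of consecutive non-residues whose endpoints produce $\beta\in NQR(q)$ with $(\beta-1)(\beta+1)\in NQR(q)$ (Lemmas \ref{lemma:1+beta} and \ref{lemma:completo}), and the power-sum computation of $\sum_{x\in C_0}(\beta x+1)^{\Delta t}$, yielding $1=\sum_{s=0}^{\Delta}\binom{\Delta t}{st}\beta^{st}$ if all elements $\beta'-1$ with $\beta'\in\beta\hat{C}_0$ lay in one quadratic class, leads to the contradiction $\Delta t=0$, so the quadratic character is non-constant on $\{\beta'-1:\beta'\in\beta\hat{C}_0\}$ (Lemma \ref{lemma:(beta+1)(beta-1)}, with Corollary \ref{coro:(beta+1)(beta-1)} for the $+1$ version). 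Without an argument of this kind your proposal is a correct reduction of the theorem to its key lemma, not a proof: the missing lemma is exactly where the hypotheses $q=2^kt+1$, $t>1$ odd, do quantitative work, rather than merely making $\rho\neq1$ available.
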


In the known results, there are constructions of strong starters for $\mathbb{F}_q$ (see \cite{MR0325419,dinitz1984,MR0392622,MR0249314,MR0260604}), but none of those constructions gives an explicit construction of strong starters which satisfy the conclutions of the Main Theorem.

This paper is organized as follows. 
In Section 2, we recall some basic properties about quadratic residues. 
In Section 3, we include an alternative proof when $q\equiv3$ mod 4 and $q\neq3$ (see \cite{MR0249314}). In section 4, we present some previous results. Finally, in Section 5, we prove the main theorem and present some examples.
\section{Quadratic residues}\label{sec:quadratic}
Let $q$ be an odd prime power. An element $x\in\mathbb{F}_q^*$ is called a \emph{quadratic residue} if there exists an element $y\in\mathbb{F}_q^{*}$ such that $y^2=x$. If there is no such $y$, then $x$ is called a \emph{non-quadratic residue.} The set of quadratic residues of $\mathbb{F}_q^{*}$ is denoted by $QR(q)$ and the set of non-quadratic residues is denoted by $NQR(q)$. It is well known that $QR(q)$ is a cyclic subgroup of $\mathbb{F}_q^{*}$ of cardinality $\frac{q-1}{2}$ (see \cite{MR2445243}  pg. 87), that is

\begin{teo}
	Let $q$ be an odd prime power, then $QR(q)$ is a cyclic subgroup of $\mathbb{F}_q^{*}$. Furthermore, $|QR(q)|=|NQR(q)|=\frac{q-1}{2}$. 
\end{teo}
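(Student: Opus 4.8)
The plan is to prove this via the standard squaring homomorphism on the multiplicative group $\mathbb{F}_q^*$. Recall that since $q$ is an odd prime power, $\mathbb{F}_q^*$ is a cyclic group of order $q-1$, a fact I may invoke from the cited reference. Let me write $\mathbb{F}_q^*=\langle g\rangle$ for some generator (primitive element) $g$.

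First I would consider the map $\varphi:\mathbb{F}_q^*\to\mathbb{F}_q^*$ defined by $\varphi(x)=x^2$. This is a group homomorphism because $\mathbb{F}_q^*$ is abelian, so $\varphi(xy)=(xy)^2=x^2y^2=\varphi(x)\varphi(y)$. By definition, the image of $\varphi$ is exactly the set of squares in $\mathbb{F}_q^*$, which is precisely $QR(q)$. Since the image of a group homomorphism is a subgroup of the codomain, $QR(q)$ is a subgroup of $\mathbb{F}_q^*$. Being a subgroup of the cyclic group $\mathbb{F}_q^*$, it is automatically cyclic, which establishes the first claim.

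Next I would compute the cardinality via the First Isomorphism Theorem, using $|QR(q)|=|\mathbb{F}_q^*|/|\ker\varphi|$. The kernel is $\ker\varphi=\{x\in\mathbb{F}_q^*:x^2=1\}$, the set of square roots of unity. The polynomial $x^2-1=(x-1)(x+1)$ has exactly the roots $x=\pm 1$ in the field $\mathbb{F}_q$, and since $q$ is odd we have $1\neq -1$, so $\ker\varphi=\{1,-1\}$ has order $2$. Therefore $|QR(q)|=(q-1)/2$. Finally, $NQR(q)=\mathbb{F}_q^*\setminus QR(q)$ is the complement, so $|NQR(q)|=(q-1)-\frac{q-1}{2}=\frac{q-1}{2}$, giving $|QR(q)|=|NQR(q)|=\frac{q-1}{2}$ as claimed.

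There is no serious obstacle here; the only point requiring care is verifying that $\ker\varphi$ has order exactly $2$, which hinges on $q$ being odd so that $1$ and $-1$ are distinct in $\mathbb{F}_q$. An entirely equivalent route, should one prefer to avoid the isomorphism theorem, is to argue directly with the primitive element: $g^j$ is a square precisely when $j$ is even, so $QR(q)=\langle g^2\rangle$ is the cyclic subgroup generated by $g^2$, which visibly consists of the $\frac{q-1}{2}$ even powers of $g$, with the odd powers forming $NQR(q)$. Either approach delivers the result cleanly.
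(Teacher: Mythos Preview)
Your proof is correct. Note, however, that the paper does not actually supply its own proof of this theorem: it is stated as a well-known fact with a citation to Hardy and Wright (\cite{MR2445243}, p.~87), so there is no in-paper argument to compare against. Your argument via the squaring homomorphism (or, equivalently, the explicit description $QR(q)=\langle g^2\rangle$) is precisely the standard textbook proof one would find in the cited reference.
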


\begin{coro}\label{col:mulres}
	Let $q$ be an odd prime power, then
	\begin{enumerate}
		\item if either $x,y\in QR(q)$ or $x,y\in NQR(q)$, then $xy\in QR(q)$,
		\item if $x\in QR(q)$ and $y\in NQR(q)$, then $xy\in NQR(q)$.
	\end{enumerate}
\end{coro}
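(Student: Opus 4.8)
The plan is to deduce both statements from the subgroup structure already established in the preceding theorem: $QR(q)$ is a subgroup of $\mathbb{F}_q^*$ of index $2$, since $|QR(q)|=\frac{q-1}{2}=\frac12|\mathbb{F}_q^*|$. The cleanest route is to fix a generator $g$ of the cyclic group $\mathbb{F}_q^*$ and read off residue status from the parity of exponents.

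First I would record the characterization: an element $x=g^i$ lies in $QR(q)$ precisely when $i$ is even, and in $NQR(q)$ precisely when $i$ is odd. Indeed, if $i$ is even then $g^i=(g^{i/2})^2$ is a square; conversely, if $g^i=(g^j)^2=g^{2j}$ then $i\equiv 2j\pmod{q-1}$, and since $q-1$ is even this forces $i$ even. As $QR(q)$ consists of exactly the $\frac{q-1}{2}$ distinct even-exponent elements, every remaining element (odd exponent) must be a non-residue, so the even/odd dichotomy is exhaustive.

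With this in hand the two statements reduce to parity arithmetic. Writing $x=g^i$ and $y=g^j$, we have $xy=g^{i+j}$, and $i+j$ is even exactly when $i$ and $j$ have the same parity. Hence if $x,y$ both lie in $QR(q)$ (both exponents even) or both lie in $NQR(q)$ (both exponents odd), then $i+j$ is even and $xy\in QR(q)$, proving (1); and if $x\in QR(q)$ while $y\in NQR(q)$, then one exponent is even and the other odd, so $i+j$ is odd and $xy\in NQR(q)$, proving (2).

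The argument is essentially bookkeeping, so there is no genuine obstacle; the only point that needs care is the converse direction of the exponent characterization, namely verifying that an odd-exponent element cannot accidentally be a square, where I would invoke that $q-1$ is even (because $q$ is odd) to rule out $i\equiv 2j\pmod{q-1}$ with $i$ odd. An alternative and equally short route, avoiding an explicit generator, is to pass to the quotient group $\mathbb{F}_q^*/QR(q)$, which has order $2$ by the index count; then $NQR(q)$ is its unique nontrivial coset, and the three product rules are exactly the multiplication table of $\mathbb{Z}/2\mathbb{Z}$.
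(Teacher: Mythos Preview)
Your proof is correct. The paper does not supply an explicit proof of this corollary; it is stated immediately after the theorem that $QR(q)$ is a cyclic subgroup of $\mathbb{F}_q^*$ of index $2$ and is meant to follow directly from that fact. Your quotient-group remark at the end is exactly the intended deduction, and your primitive-root/parity argument is a perfectly good way of spelling it out.
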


The following theorems are well known results on quadratic residues. 
For more details of this kind of results the reader may consult \cite{burton2007elementary} pg. 171, see also \cite{MR2445243}.

\begin{teo}[Eulers' criterion]
	Let $q$ be an odd prime and $x\in\mathbb{F}_q^{*}$, then
	\begin{enumerate}
		\item $x\in QR(q)$ if and only if $x^{\frac{q-1}{2}}=1$.
		\item $x\in NQR(q)$ if and only if $x^{\frac{q-1}{2}}=-1$.
	\end{enumerate}
\end{teo}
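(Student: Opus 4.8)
The plan is to deduce both statements from Fermat's little theorem together with the fact, recorded in the Theorem above, that $|QR(q)|=\frac{q-1}{2}$, and the observation that a field has no zero divisors. First I would note that for every $x\in\mathbb{F}_q^*$ Fermat's little theorem gives $x^{q-1}=1$, so that
\[
\left(x^{\frac{q-1}{2}}-1\right)\left(x^{\frac{q-1}{2}}+1\right)=x^{q-1}-1=0.
\]
Since $\mathbb{F}_q$ is a field (hence an integral domain) and $q$ is odd, so that $1\neq-1$, this forces $x^{\frac{q-1}{2}}\in\{1,-1\}$ for every $x$. This dichotomy is the backbone of both parts.

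For the forward implication of part (1), I would take $x\in QR(q)$, write $x=y^2$ with $y\in\mathbb{F}_q^*$, and compute $x^{\frac{q-1}{2}}=y^{q-1}=1$, again by Fermat. Thus every quadratic residue is a root of the polynomial $f(T)=T^{\frac{q-1}{2}}-1\in\mathbb{F}_q[T]$. For the converse I would use a counting argument: $f(T)$ has degree $\frac{q-1}{2}$, so it has at most $\frac{q-1}{2}$ roots in the field $\mathbb{F}_q$; by the previous step all elements of $QR(q)$ are roots, and by the Theorem above there are exactly $\frac{q-1}{2}$ of them, so the quadratic residues account for \emph{all} the roots of $f$. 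Consequently, if $x^{\frac{q-1}{2}}=1$ then $x$ is a root of $f$ and therefore $x\in QR(q)$. This establishes part (1) in both directions.

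Part (2) then follows immediately from part (1) and the dichotomy. Since $\mathbb{F}_q^*=QR(q)\sqcup NQR(q)$ is a disjoint partition, the condition $x\in NQR(q)$ is equivalent to $x\notin QR(q)$, which by part (1) is equivalent to $x^{\frac{q-1}{2}}\neq1$; but $x^{\frac{q-1}{2}}\in\{1,-1\}$ always, so this is equivalent to $x^{\frac{q-1}{2}}=-1$.

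I expect the only genuinely delicate point to be the converse of part (1); everything else is a direct computation with Fermat's theorem. The cleanest way to pin it down is the degree/root count above, which converts the set-size equality $|QR(q)|=\frac{q-1}{2}$ from the previous Theorem into the statement that the quadratic residues exhaust the solution set of $x^{\frac{q-1}{2}}=1$. An alternative route, should one prefer not to invoke the root bound, is to fix a generator $g$ of the cyclic group $\mathbb{F}_q^*$, write $x=g^m$, and observe that $x^{\frac{q-1}{2}}=g^{m(q-1)/2}=1$ holds precisely when $m$ is even, which is exactly the condition for $x=(g^{m/2})^2$ to be a square.
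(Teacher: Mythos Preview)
Your argument is correct and is essentially the standard textbook proof of Euler's criterion. Note, however, that the paper does not supply its own proof of this theorem: it is stated as a well-known result on quadratic residues, with a reference to Burton's \emph{Elementary Number Theory} (and Hardy--Wright) for details. So there is no in-paper proof to compare against; your proposal would serve perfectly well as such a proof, and the alternative you sketch via a primitive root $g$ is equally standard.
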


\begin{teo}\label{col:menosuno}
	Let $q$ be an odd prime power, then
	\begin{enumerate}
		\item $-1\in QR(q)$ if and only if $q\equiv1$ mod $4$.
		\item $-1\in NQR(q)$ if and only if $q\equiv3$ mod $4$.
	\end{enumerate}
\end{teo}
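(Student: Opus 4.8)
The plan is to reduce everything to Euler's criterion together with a parity computation. The key observation I would start from is that $-1$ is the unique element of order $2$ in $\mathbb{F}_q^*$: since $q$ is odd we have $-1\neq1$, while $(-1)^2=1$, so $-1$ has order exactly $2$. By the preceding Euler's criterion, $-1\in QR(q)$ if and only if $(-1)^{(q-1)/2}=1$, and $-1\in NQR(q)$ if and only if $(-1)^{(q-1)/2}=-1$. Thus the entire question becomes the evaluation of the single power $(-1)^{(q-1)/2}$.

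Next I would evaluate $(-1)^{(q-1)/2}$ purely by the parity of its exponent. Writing $m=(q-1)/2$, we have $(-1)^m=1$ exactly when $m$ is even, that is when $4\mid(q-1)$, i.e. $q\equiv1$ mod $4$; and $(-1)^m=-1$ exactly when $m$ is odd, that is when $q-1\equiv2$ mod $4$, i.e. $q\equiv3$ mod $4$. Combining this with Euler's criterion yields both items at once, since an odd prime power $q$ satisfies either $q\equiv1$ or $q\equiv3$ mod $4$, and $QR(q)$, $NQR(q)$ partition $\mathbb{F}_q^*$, so the two equivalences are complementary.

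The one point that needs care is that Euler's criterion was stated above only for an odd \emph{prime}, whereas the present statement concerns an arbitrary odd prime \emph{power} $q$. I would bridge this gap using the cyclic structure of $\mathbb{F}_q^*$ recalled at the start of this section: because $\mathbb{F}_q^*$ is cyclic of order $q-1$, every $x\in\mathbb{F}_q^*$ satisfies $\left(x^{(q-1)/2}\right)^2=x^{q-1}=1$, so $x^{(q-1)/2}\in\{1,-1\}$; fixing a generator $g$, the quadratic residues are exactly the even powers $g^{2j}$, and these are precisely the $x$ with $x^{(q-1)/2}=1$. This re-establishes Euler's criterion verbatim over any odd prime power, after which the parity argument above applies unchanged. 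Equivalently, one may argue directly from the generator, noting that $-1=g^{(q-1)/2}$ and that this exponent is even precisely when $q\equiv1$ mod $4$. I expect no genuine obstacle here; the only things to get right are this mild extension of Euler's criterion from primes to prime powers and the parity bookkeeping.
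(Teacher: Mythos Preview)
Your argument is correct and is the standard proof of this classical fact. Note, however, that the paper does not supply its own proof of this theorem: it is stated as a well-known result with a reference to \cite{burton2007elementary} and \cite{MR2445243}. Since there is no proof in the paper to compare against, let me simply confirm that your approach is sound, including your care in extending Euler's criterion from odd primes to odd prime powers via the cyclicity of $\mathbb{F}_q^*$; that extension is exactly the right way to close the small gap between the hypotheses of the cited Euler's criterion and those of the present statement.
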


\begin{teo}\label{inverso}
	Let q be an  odd prime. If $q\equiv1$ mod $4$, then 
	\begin{enumerate}
		\item $x\in QR$ if and only if $-x\in QR$.
		\item $x\in NQR$ if and only if $-x\in NQR$.
	\end{enumerate}
\end{teo}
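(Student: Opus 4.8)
The plan is to reduce everything to the single fact that $-1$ lands in $QR(q)$ under the hypothesis $q\equiv1$ mod $4$, and then to exploit the multiplicative behaviour of residues already recorded in Corollary~\ref{col:mulres}. First I would invoke Theorem~\ref{col:menosuno}: since $q\equiv1$ mod $4$, we have $-1\in QR(q)$. This is the only place the congruence hypothesis is used, and it is what makes multiplication by $-1$ a \emph{residue-preserving} operation rather than a residue-flipping one.

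For part~1, suppose $x\in QR(q)$. Then $x$ and $-1$ are both quadratic residues, so by Corollary~\ref{col:mulres}(1) their product $(-1)\cdot x=-x$ is again a quadratic residue. The converse direction follows by the same argument applied to $-x$ in place of $x$: if $-x\in QR(q)$, then $(-1)\cdot(-x)=x\in QR(q)$. Equivalently, one observes that the map $x\mapsto -x$ is an involution on $\mathbb{F}_q^{*}$, so establishing one implication automatically yields the other.

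For part~2 I would argue analogously, but now using Corollary~\ref{col:mulres}(2). If $x\in NQR(q)$, then $x$ is a non-quadratic residue while $-1\in QR(q)$, so their product $-x$ is a non-quadratic residue; the reverse implication again follows by applying the involution $x\mapsto -x$ once more. Alternatively, part~2 can be deduced from part~1 purely formally, since $QR(q)$ and $NQR(q)$ partition $\mathbb{F}_q^{*}$ and the bijection $x\mapsto -x$ preserves $QR(q)$ by part~1, hence must also preserve its complement $NQR(q)$.

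I do not anticipate a genuine obstacle here: the statement is an immediate corollary of the two earlier results, and the only subtlety worth flagging is that the hypothesis $q\equiv1$ mod $4$ is essential precisely because it forces $-1\in QR(q)$; without it (i.e.\ when $q\equiv3$ mod $4$) Theorem~\ref{col:menosuno} gives $-1\in NQR(q)$ and the map $x\mapsto -x$ would instead interchange $QR(q)$ and $NQR(q)$, reversing the conclusion.
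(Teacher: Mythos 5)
Your proof is correct. The paper itself offers no proof of this theorem --- it is stated as a well-known fact with the reader referred to the references (Burton, Hardy--Wright) --- and your argument is exactly the standard derivation the paper implicitly relies on: $q\equiv1$ mod $4$ forces $-1\in QR(q)$ by Theorem~\ref{col:menosuno}, after which Corollary~\ref{col:mulres} and the involution $x\mapsto -x$ give both parts, with your closing remark correctly identifying why the hypothesis is essential.
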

\section{Case $q\equiv3$ mod 4, with $q\neq3$}
Now, for the sake of completeness, we include a proof for the case $q\equiv3$ mod 4, with $q\neq3$, using different notation that in \cite{MR0249314}. This notation will be used, of general way, in the main result of this paper. 

\begin{lema}
	If $q\equiv3$ mod 4 is an odd prime power with $q\neq3$, then there exists a strong starter $S$ for $\mathbb{F}_q$ such that $\{a,b\}\in S$ satisfy that $a\in QR(q)$ and $b\in NQR(q)$.
\end{lema}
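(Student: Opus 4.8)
The plan is to exhibit an explicit one-quotient strong starter built from a single non-residue multiplier. Since $q\equiv 3 \bmod 4$, Theorem~\ref{col:menosuno} gives $-1\in NQR(q)$, and because $q\neq 3$ we have $|NQR(q)|=\frac{q-1}{2}\geq 2$, so I can fix an element $\beta\in NQR(q)$ with $\beta\neq -1$. I would then set
$$S=\left\{\{a,\beta a\}: a\in QR(q)\right\},$$
which by construction consists of $\frac{q-1}{2}$ pairs, each of the form $\{a,b\}$ with $a\in QR(q)$ and, by Corollary~\ref{col:mulres}(2), $b=\beta a\in NQR(q)$.

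Next I would verify the two starter axioms. For the covering condition, the first coordinates range exactly over $QR(q)$, while the second coordinates range over $\beta\cdot QR(q)=NQR(q)$ (again Corollary~\ref{col:mulres}(2) together with the fact that $a\mapsto \beta a$ is a bijection); since $QR(q)$ and $NQR(q)$ partition $\mathbb{F}_q^*$, every non-zero element appears exactly once. For the difference condition, each pair contributes $\pm(a-\beta a)=\pm a(1-\beta)$. As $a$ runs over $QR(q)$ the values $a(1-\beta)$ fill the coset $(1-\beta)QR(q)$ (note $1-\beta\neq 0$ since $\beta\in NQR(q)$ while $1\in QR(q)$), and the values $-a(1-\beta)=a(\beta-1)$ fill the opposite coset, because $-1\in NQR(q)$ forces $(\beta-1)QR(q)$ and $(1-\beta)QR(q)$ to lie in different classes. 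These two cosets are disjoint and together exhaust $\mathbb{F}_q^*$, so $\{\pm(x_i-y_i)\}=\mathbb{F}_q^*$.

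Finally I would check the strong condition by computing the sums $a+\beta a=a(1+\beta)$. The hard part, and the only place where the hypothesis $q\neq 3$ is genuinely needed, is guaranteeing these sums are distinct and non-zero: this is exactly why $\beta$ must be chosen different from $-1$, for then $1+\beta\neq 0$, the map $a\mapsto a(1+\beta)$ is injective on $QR(q)$, and it produces $\frac{q-1}{2}$ pairwise distinct non-zero sums. When $q=3$ the only non-residue is $-1$ and no admissible $\beta$ exists, which explains the exclusion. Taking $Q=\{\beta\}$ then shows $S$ is in fact a one-quotient strong starter, completing the proof.
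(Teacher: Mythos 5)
Your proposal is correct and takes essentially the same approach as the paper, which constructs the identical one-quotient (Dinitz) starter $S_\beta=\left\{\{\alpha^i,\alpha^i\beta\}: i=1,\ldots,\tfrac{q-1}{2}\right\}$ with $\beta\in NQR(q)$, $\beta+1\neq 0$, and verifies the same three conditions using $-1\in NQR(q)$. Your coset formulation of the difference condition (that $(1-\beta)QR(q)$ and $(\beta-1)QR(q)$ are the two distinct quadratic classes) is merely a cleaner packaging of the paper's argument that $\alpha^i(\beta-1)=\pm\alpha^j(\beta-1)$ forces $1+\alpha^{j-i}=0$, contradicting $-1\in NQR(q)$.
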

\begin{proof}
	Let $\alpha$ be a generator of $QR(q)$ and $\beta\in NQR(q)$ such that $\beta+1\neq0$. We claim that the following set:
	\begin{eqnarray*}\label{strong_1}
		S_\beta=\left\{\{\alpha,\alpha\beta\},\{\alpha^2,\alpha^2\beta\},\ldots,\{\alpha^\frac{p-1}{2},\alpha^\frac{p-1}{2}\beta\}\right\},
	\end{eqnarray*}
	is a strong starter for $\mathbb{F}_q$. First we have that $\{\alpha,\alpha^2,\ldots,\alpha^{\frac{q-1}{2}}\}=QR(q)$ and  $\{\alpha\beta,\alpha^2\beta,\ldots,\alpha^{\frac{q-1}{2}}\beta\}=\beta QR=NQR(q)$ (by Corollary \ref{col:mulres}). 
	Now we shall prove that $\left\{\pm\alpha^i(\beta-1): i=0,\ldots,\frac{q-1}{2}\right\}=\mathbb{F}_q^*$. 
	Suppose that $\alpha^i(\beta-1)=\pm\alpha^j(\beta-1)$, then $(\beta-1)(\alpha^i\pm\alpha^j)=0$, for $i\neq j\in\{1,\ldots,\frac{q-1}{2}\}$, which it is a contradiction, since if $i<j$ then $\alpha^i(1+\alpha^{j-i})\neq0$, that is, $1+\alpha^{j-i}\neq0$ (by Theorem \ref{col:menosuno}). 
	Finally, we have that $\left|\{\alpha^i(\beta+1): i=0,\ldots,\frac{q-1}{2}\}\right|=\frac{q-1}{2}$, since if $i\neq j\in\{1,\ldots,\frac{q-1}{2}\}$ then $\alpha^i(\beta+1)\neq\alpha^j(\beta+1)$. 
\end{proof}

To end this section, it is easy to see that $S_\beta$ is an example of one quotient strong starter for $\mathbb{F}_q$. This kind of starters are called \emph{Dinitz starters} for $\mathbb{F}_q$, see \cite{book:206537}, Theorem VI.55.22, page 624.

\section{Previous results}\label{sec:particular}
To begin with, we introduce some terminology in order to simplify the description of the of existence of strong starters $S$ for $\mathbb{F}_q$ with the property that if $\{a,b\}\in S$, then $a\in QR(q)$ and $b\in NQR(q)$.

Let $q=2^kt+1$ be a prime power with $k>1$ a positive integer and
$t$ an odd integer greater than 1 and $\alpha$ be a generator of $QR(q)$. We define $C_0=\langle\alpha^\Delta\rangle$, where $\Delta=2^{k-1}$, to be the subgroup of $\mathbb{F}_q^*$ of order $t$. Let
$C_j=\alpha^jC_0$, for $j
=1,\ldots,\Delta_1-1$ with $\Delta_1=2^{k-1}$, and $\hat{C}_j=-C_j$, for $j=0,\ldots,\Delta_1-1$ . Hence $QR(q)=\bigcup_{j=0}^{\Delta_1-1}(C_j\cup\hat{C}_j)$. On the other hand, let $\beta_1\in NQR(q)$ and $\beta_2\in\beta_1\hat{C}_0$. We define $D_j=\beta_1 C_j$ and $\hat{D}_j=\beta_2 C_j$, for $j=0,\ldots,\Delta_1-1$.  Hence $NQR(q)=\bigcup_{i=0}^{\Delta_1-1}(D_j\cup \hat{D}_j)$. Moreover, it is easy to see that $\sum_{a\in C_j}a=0$, $\sum_{a\in \hat{C}_j}a=0$, $\sum_{a\in D_j}a=0$ and $\sum_{a\in \hat{D}_j}a=0$, for $j=1,\ldots,\Delta_1-1$.

To prove the main theorem of this paper (see Theorem \ref{thm:main}), we need to prove the following auxiliary lemma, which states the condition of existence of strong starters $S$ for $\mathbb{F}_q$ with the property that if $\{a,b\}\in S$, then $a\in QR(q)$ and $b\in NQR(q)$, and wthe proof of this lemma is obtained from Lemmas \ref{lemma:completo} and \ref{lemma:(beta+1)(beta-1)}. 

\begin{lema}\label{coro:completo}
Let $q=2^kt+1$ be a prime power with $k>1$ a positive integer and
$t$ an odd integer greater than 1. Then there exist $\beta_1\in NQR(q)$ and $\beta_2\in\beta_1\hat{C}_0$, such that $(\beta_1-1)(\beta_2+1)\in NQR(q)$ and $(\beta_1+1)(\beta_2-1)\in NQR(q)$.
\end{lema}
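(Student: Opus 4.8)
The plan is to pass to the quadratic character and turn the two membership requirements into a counting problem. Let $\chi$ denote the quadratic character of $\mathbb{F}_q$, so $\chi(x)=1$ for $x\in QR(q)$, $\chi(x)=-1$ for $x\in NQR(q)$, and $\chi(0)=0$; recall $\chi$ is multiplicative by Corollary \ref{col:mulres}. Since $k>1$ we have $q\equiv1$ mod $4$, hence $-1\in QR(q)$ by Theorem \ref{col:menosuno}. Consequently $\beta_1,\beta_2\in NQR(q)$ forces $\beta_1,\beta_2\notin\{1,-1\}$, so each of $\beta_1\pm1,\ \beta_2\pm1$ is nonzero and has character $\pm1$. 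Writing $\beta_2=-\beta_1 c$ with $c\in C_0$ (this is exactly the meaning of $\beta_2\in\beta_1\hat{C}_0$, since $\hat{C}_0=-C_0$), the two desired conditions become $\chi(\beta_1-1)=-\chi(\beta_2+1)$ and $\chi(\beta_1+1)=-\chi(\beta_2-1)$. Multiplying them shows that $\chi(\beta_1^2-1)=\chi(\beta_2^2-1)$ is necessary, and together with either single sign condition it is also sufficient; this is why the argument naturally splits into the behaviour of $(\beta-1)(\beta+1)=\beta^2-1$ and one residual sign, the content one expects to be packaged in the auxiliary lemmas. Observe also that the choice $c=1$ gives $(\beta_1-1)(\beta_2+1)=-(\beta_1-1)^2\in QR(q)$, so $c=1$ can never satisfy the requirements; this is precisely where the hypothesis $t>1$ enters, as it guarantees $C_0\setminus\{1\}\neq\emptyset$.

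Next I would establish existence by counting. Since $c=1$ contributes nothing, let $N$ be the number of pairs $(\beta_1,c)\in NQR(q)\times\bigl(C_0\setminus\{1\}\bigr)$ for which, setting $\beta_2=-\beta_1 c$, both products lie in $NQR(q)$. Using $\mathbf{1}[x\in NQR(q)]=\tfrac12(1-\chi(x))$ for $x\neq0$, I would write
\[
N=\frac14\sum_{\beta_1\in NQR(q)}\ \sum_{c\in C_0\setminus\{1\}}\bigl(1-\chi((\beta_1-1)(\beta_2+1))\bigr)\bigl(1-\chi((\beta_1+1)(\beta_2-1))\bigr),\qquad \beta_2=-\beta_1 c,
\]
and then replace the restriction $\beta_1\in NQR(q)$ by $\tfrac12(1-\chi(\beta_1))$ summed over $\beta_1\in\mathbb{F}_q^*$. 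Expanding the resulting product yields a main term $\tfrac18(q-1)(t-1)$ (from the constant $1$) together with seven error terms, each of the shape $\sum_{c}\sum_{\beta_1}\chi(g_c(\beta_1))$.

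Each $g_c$ is a polynomial of degree at most $5$, the extreme case being $g_c(\beta_1)=\beta_1(\beta_1^2-1)(c^2\beta_1^2-1)$, obtained after substituting $\beta_2=-\beta_1 c$ and using $(\beta_2+1)(\beta_2-1)=\beta_1^2c^2-1$. The key point is that for every admissible $c$ the polynomial $g_c$ is not a constant multiple of a square: the degree-$5$ sum has a simple root at $0$, while the lower-degree sums have roots $\{0,1,1/c\}$ or $\{0,-1,-1/c\}$, which are distinct once $c\neq1$ (and $c\neq-1$ is automatic, since $C_0$ has odd order $t$ and so cannot contain the element $-1$ of order two). Hence Weil's bound gives $\bigl|\sum_{\beta_1}\chi(g_c(\beta_1))\bigr|\le4\sqrt q$, and summing over the $t-1$ values of $c$ bounds the total error by $O(t\sqrt q)$. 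Comparing with the main term, the factor $t-1$ cancels and one obtains $N\ge\tfrac{t-1}{8}\bigl((q-1)-c_0\sqrt q\bigr)$ for an absolute constant $c_0$, so $N>0$ for all $q$ beyond an absolute threshold, uniformly in the factorisation $q=2^kt+1$; the finitely many remaining small prime powers are checked directly (for instance $q=13$ admits $\beta_1=2,\ \beta_2=7$).

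The main obstacle is exactly this uniform bounding step: one must confirm that the relevant character sums are genuinely non-degenerate (equivalently, that the cubic and quintic polynomials above never become perfect squares for $c\in C_0\setminus\{1\}$), keep the Weil constant small enough that the crossover threshold leaves only a short, explicitly verifiable list of small $q$, and ensure that the bound survives uniformly across all pairs $(k,t)$ with $k>1$ and $t>1$ odd. If one instead wishes to avoid Weil's theorem in keeping with the elementary tone of the earlier sections, the same count must be extracted from explicit evaluations of the consecutive-residue and $\beta^2-1$ distributions, which is the more delicate but self-contained route and presumably the one realised through Lemmas \ref{lemma:completo} and \ref{lemma:(beta+1)(beta-1)}.
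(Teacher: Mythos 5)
Your route is genuinely different from the paper's. The paper never counts pairs globally: it factors the problem through two elementary facts, namely Lemma \ref{lemma:completo} (there exists $\beta\in NQR(q)$ with $(\beta-1)(\beta+1)\in NQR(q)$, obtained from runs of consecutive non-residues, which in turn come from the order-$2$ cyclotomic numbers quoted before Lemma \ref{lemma:1+beta}) and Lemma \ref{lemma:(beta+1)(beta-1)} together with Corollary \ref{coro:(beta+1)(beta-1)} (on any coset $\beta\hat{C}_0$ the maps $\beta'\mapsto\chi(\beta'-1)$ and $\beta'\mapsto\chi(\beta'+1)$ are non-constant, proved by an explicit power-sum computation: if $\chi(\beta'-1)$ were constant on the coset, summing $(\beta'-1)^{\Delta t}$ binomially over the coset and then over all of $NQR(q)$ forces $\Delta t=0$, a contradiction). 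This combination yields the required $\beta_1,\beta_2$ for every admissible $q$ at once, with no analytic input and no residual case check. Your character-sum expansion buys more than the paper proves --- a count of order $\tfrac{1}{8}(t-1)(q-1)$ valid pairs, hence abundance rather than bare existence --- and its core is sound: your reduction (necessity and sufficiency of $\chi(\beta_1^2-1)=\chi(\beta_2^2-1)$ plus one sign condition, exclusion of $c=1$ because $(\beta_1-1)(\beta_2+1)=-(\beta_1-1)^2\in QR(q)$), the cancellation of the factor $t-1$ giving uniformity in $(k,t)$, and the non-degeneracy checks are all correct, since with $\beta_2=-\beta_1c$ the seven polynomials have root sets contained in $\{0,\pm1,\pm1/c\}$, which are separated exactly because $c\neq1$ and $c\neq-1$ (the latter automatic as $C_0$ has odd order). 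One detail worth recording: after replacing the restriction $\beta_1\in NQR(q)$ by $\tfrac12(1-\chi(\beta_1))$, the points where $(\beta_1-1)(\beta_2+1)$ or $(\beta_1+1)(\beta_2-1)$ vanishes are $\beta_1\in\{\pm1,\pm1/c\}\subseteq QR(q)$, where that prefactor is $0$, so the convention $\chi(0)=0$ introduces no spurious terms.

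The one genuine gap is the finite verification you defer. With your stated constant ($4\sqrt q$ for each of seven terms) the crossover is $q-1>28\sqrt q$, i.e.\ roughly $q>790$; even with the sharp degree-by-degree bound $(1+1+2+2+3+4)\sqrt q=13\sqrt q$ you must still check every prime power $q=2^kt+1<172$ with $k>1$ and $t>1$ odd --- about twenty fields, among them the proper prime powers $q\in\{25,49,81,121,125,169\}$, where the check is a computation in $\mathbb{F}_q$ rather than arithmetic modulo $q$. You verify only $q=13$. This list is routine to dispatch, but until it is dispatched the argument does not establish the lemma as stated; and it is precisely the step the paper's elementary cyclotomy-plus-power-sum route is engineered to avoid, along with any appeal to Weil's theorem, keeping the proof at the level of the tools of Section \ref{sec:quadratic}. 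To turn your proposal into a complete proof, fix the explicit constant, state the Weil hypothesis you are invoking (the $g_c$ are not constant multiples of squares, which your root analysis already gives), and append the finite table of witnesses $(\beta_1,\beta_2)$ for the small $q$.
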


We present the sketch of the proof of Main Theorem: Let $\alpha$ be a gene\-rator of $QR(q)$, $\beta_1\in NQR(q)$ and $\beta_2\in\beta_1\hat{C}_0$ such that $(\beta_1-1)(\beta_2+1)\in NQR(q)$ and $(\beta_1+1)(\beta_2-1)\in NQR(q)$ (by Lemma \ref{coro:completo}), then the following set
$S(\beta_1,\beta_2)=\displaystyle\bigcup_{i=0}^{\Delta_1-1}S(\beta_1,\beta_2)_j$ is a strong starter for $\mathbb{F}_q$, where
$S(\beta_1,\beta_2)_{j}=\{\{x,\beta_1x\},\{y,-\beta_2y\}:x\in C_j,y\in\hat{C}_j\}$, for $j=0,\ldots,\Delta_1-1$. We have that $QR(q)=\bigcup_{j=0}^{\Delta_1-1}\{\{x\}\cup\{y\}:x\in C_j,y\in\hat{C}_j\}$ and $NQR(q)=\bigcup_{j=0}^{\Delta_1-1}\{\{\beta_1x\}\cup\{-\beta_2y\}:x\in C_j,y\in\hat{C}_j\}$. Moreover, if $\{a,b\}\in S(\beta_1,\beta_2)$, then $a/b\in\{\beta_1,-\beta_2\}$ or $b/a\in\{\beta_1,-\beta_2\}$, which impliest that $S(\beta_1,\beta_2)$ is a two-quotient strong starter for $\mathbb{F}_q$. Moreover, if $\{a,b\}\in S(\beta_1,\beta_2)$, then $a/b\in\{\beta_1,-\beta_2\}$ or $b/a\in\{\beta_1,-\beta_2\}$, which impliest that $S(\beta_1,\beta_2)$ is a two-quotient strong starter for $\mathbb{F}_q$.

To prove the following lemma, we needs the next definition: Let $q=ef+1$ be a prime power and let $H$ be the subgroup of $\mathbb{F}_q^*$ of order $f$ with $\{H=C_0,\ldots,C_{e-1}\}$ the set of (multiplicative) cosets of $H$ in $\mathbb{F}_q^*$ (that is, $C_i = g^iC_0$, where $g$ is the least primitive element of $\mathbb{F}_q$). The cyclotomic number
$(i,j)$ is $|\{x\in C_i: x+1\in C_j\}|$. In particular, if $e=2$ and $f$ is even, then $(0,0)=\frac{f-2}{2}$, $(0,1)=\frac{f}{2}$, $(1,0)=\frac{f}{2}$ and $(1,1)=\frac{f}{2}$, see \cite{book:206537}, Table VII.8.50. Hence if $C_0=QR(q)$ and $C_1=NQR(q)$ are the cosets of $QR(q)$ in $\mathbb{F}_q^*$, then we have the following:

\begin{lema}\label{lemma:1+beta}
Let $q=2^kt+1$ be a prime power with $k>1$ a positive integer and
$t$ an odd integer greater than 1. Then there exist $\beta_1,\beta_2\in NQR(q)$ such that 
\begin{enumerate}
	\item $(\beta_1+1)\in NQR(q)$ and $(\beta_2+1)\in QR(q)$.
	\item $(\beta_1-1)\in NQR(q)$ and $(\beta_2-1)\in QR(q)$.
\end{enumerate}
\end{lema}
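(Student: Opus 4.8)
The plan is to count, inside $NQR(q)$, how often each of the eight sign patterns of the triple $(\chi(\beta-1),\chi(\beta),\chi(\beta+1))$ occurs, where $\chi$ denotes the quadratic character of $\mathbb{F}_q$ (so $\chi(x)=1$ on $QR(q)$, $\chi(x)=-1$ on $NQR(q)$, and $\chi(0)=0$). Since $q=2^kt+1$ with $k>1$, we have $q\equiv1\bmod4$, so $-1\in QR(q)$ by Theorem~\ref{col:menosuno}, and hence $\chi(-x)=\chi(x)$ for all $x$ by Corollary~\ref{col:mulres}. The element $\beta_1$ we seek is exactly a $\beta\in NQR(q)$ realizing the pattern $(-1,-1,-1)$, and $\beta_2$ is one realizing $(+1,-1,+1)$; write $a$ and $d$ for the number of $\beta\in NQR(q)$ giving these two patterns and $b,c$ for the mixed patterns $(-1,-1,+1)$ and $(+1,-1,-1)$. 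For $\beta\in NQR(q)$ none of $\beta-1,\beta,\beta+1$ vanishes (since $0,1,-1\in QR(q)$), so these counts are well defined and $a+b+c+d=\tfrac{q-1}{2}$.

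The first step uses the cyclotomic numbers recorded above together with the symmetry $\beta\mapsto-\beta$. As $f=\tfrac{q-1}{2}=2^{k-1}t$ is even, the cosets $C_0=QR(q)$, $C_1=NQR(q)$ satisfy $(1,0)=(1,1)=\tfrac{f}{2}$, which says that among the $\beta\in NQR(q)$ exactly $\tfrac{f}{2}$ have $\beta+1\in NQR(q)$; replacing $\beta$ by $\beta-1$ gives the same count for $\beta-1\in NQR(q)$. Thus the marginals are $a+b=c+d=\tfrac{f}{2}$ and $a+c=b+d=\tfrac{f}{2}$, which force $b=c$ and $a=d$. The same conclusion follows directly from $\beta\mapsto-\beta$, which preserves $NQR(q)$ and, because $\chi(-x)=\chi(x)$, swaps $\chi(\beta-1)$ with $\chi(\beta+1)$. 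The crucial consequence is that the two existence claims collapse into one: $\beta_1$ and $\beta_2$ both exist as soon as $a=d\ge1$.

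It remains to prove $a\ge1$, and here I would pass to character sums. Expanding the indicator $\tfrac12(1-\chi(\cdot))$ of $NQR(q)$ and using $\chi(\beta^2-1)=\chi(\beta-1)\chi(\beta+1)$, one gets
\[
\sum_{\beta\in NQR(q)}\chi(\beta^2-1)=(a+d)-(b+c)=4a-\tfrac{q-1}{2}.
\]
On the other hand the same sum evaluates, via the standard quadratic sum $\sum_{\beta\in\mathbb{F}_q}\chi(\beta^2-1)=-1$ and the cubic sum $T:=\sum_{\beta\in\mathbb{F}_q}\chi(\beta^3-\beta)$, to $-1-\tfrac{T}{2}$. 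Comparing yields the clean formula $a=\tfrac{q-3-T}{8}$. The cubic $x^3-x$ has the distinct roots $0,\pm1$ in odd characteristic, so $y^2=x^3-x$ is nonsingular and the Hasse--Weil bound gives $|T|\le2\sqrt{q}$; hence $a\ge1$ whenever $q-3-2\sqrt{q}\ge8$, i.e. for all $q\ge20$. The only member of the family below this bound is $q=13$, which I would settle directly: $a=(13-3-T)/8$ with $T\equiv q-3\equiv2\pmod8$ and $|T|\le2\sqrt{13}$ forces $T\in\{2,-6\}$, so $a\in\{1,2\}$.

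\textbf{Main obstacle.} The genuinely new input is the cubic correlation. The $e=2$ cyclotomy controls only the marginals and, on its own, leaves $a=0$ possible; in fact $a=0$ is equivalent to $\chi(\beta^2-1)=-1$ for \emph{every} $\beta\in NQR(q)$, which is a quartic-residue condition invisible to quadratic cyclotomy. Excluding it is precisely where the bound on $T=\sum_\beta\chi(\beta^3-\beta)$ enters, so the Hasse--Weil estimate, together with the bookkeeping for the single small value $q=13$, is the crux of the argument.
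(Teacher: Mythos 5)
Your proof is correct, but it takes a genuinely different --- and substantially heavier --- route than the paper. The paper offers no separate proof environment for Lemma \ref{lemma:1+beta}: its justification is the paragraph immediately preceding it, which quotes the order-$2$ cyclotomic numbers $(1,0)=(1,1)=\frac{f}{2}$ for $q=2f+1$ with $f=\frac{q-1}{2}$ even. Positivity of $(1,1)$ gives some $\beta\in NQR(q)$ with $\beta+1\in NQR(q)$, positivity of $(1,0)$ gives some $\beta$ with $\beta+1\in QR(q)$, and the $\beta-1$ versions follow via $\beta\mapsto-\beta$ (legitimate since $-1\in QR(q)$ for $q\equiv1\bmod4$). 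That is, the paper reads its own lemma in the weak, per-condition sense, with possibly different witnesses for each condition --- and that weak form is all the paper later consumes: the proof of Lemma \ref{lemma:completo} needs only one $\beta$ with $|A_\beta|>1$, i.e.\ two consecutive nonresidues, i.e.\ $(1,1)>0$. You instead proved the literal simultaneous statement: a single $\beta_1$ with $\beta_1\pm1$ both in $NQR(q)$ and a single $\beta_2$ with $\beta_2\pm1$ both in $QR(q)$. You are right that order-$2$ cyclotomy cannot deliver this: it fixes only the marginals $a+b=c+d=a+c=b+d=\frac{f}{2}$ (whence $a=d$, $b=c$), and $a=0$ remains consistent with that data. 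Your identity $a=\frac{q-3-T}{8}$ with $T=\sum_{\beta}\chi(\beta^3-\beta)$, the Hasse--Weil bound $|T|\le 2\sqrt{q}$ (valid in every odd characteristic, since $x^3-x$ has distinct roots), the threshold $q\ge20$, and the ad hoc treatment of $q=13$ --- the unique family member below the threshold --- all check out; indeed for $q=13$ one has $a=d=2$, e.g.\ $\beta_1=6$ and $\beta_2=2$. One small slip: the symmetry $\beta\mapsto-\beta$ swaps the first and third entries of your sign pattern, so on its own it yields only $b=c$, not $a=d$; but since your marginal bookkeeping already gives $a=d$, nothing is lost. Net comparison: the paper's argument is a one-line table lookup proving exactly the weak statement it needs downstream, while yours buys the stronger statement that the lemma literally asserts (and an asymptotic count $a\sim q/8$) at the price of Hasse--Weil, or equivalently order-$4$ cyclotomic information, plus one small-case verification.
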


\begin{lema}\label{lemma:completo}
Let $q=2^kt+1$ be a prime power with $k>1$ a positive integer and
$t$ an odd integer greater than 1. Then there exists $\beta\in NQR(q)$ such that $(\beta+1)(\beta-1)\in NQR(q)$.
\end{lema}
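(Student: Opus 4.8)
The plan is to count rather than exhibit: set $N=\#\{\beta\in NQR(q):\beta^2-1\in NQR(q)\}$ and show $N>0$. By Corollary~\ref{col:mulres}, $(\beta+1)(\beta-1)=\beta^2-1\in NQR(q)$ precisely when exactly one of $\beta\pm1$ is a quadratic residue; equivalently, writing $\chi$ for the quadratic character of $\mathbb{F}_q$ (so $\chi(x)=1$ on $QR(q)$, $\chi(x)=-1$ on $NQR(q)$, $\chi(0)=0$), the condition is $\chi(\beta)=-1$ and $\chi(\beta^2-1)=-1$. Since $k>1$ forces $q\equiv1\pmod4$, Theorem~\ref{col:menosuno} gives $\chi(-1)=1$, so no $\beta\in NQR(q)$ equals $\pm1$; hence $\beta^2-1\neq0$ for every $\beta\in NQR(q)$, which is what makes the boundary terms below vanish.

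First I would encode $N$ as a character sum. Using the indicators $\tfrac{1-\chi(\beta)}{2}$ and $\tfrac{1-\chi(\beta^2-1)}{2}$, and checking that the terms $\beta\in\{0,1,-1\}$ each contribute $0$ (here $\chi(-1)=1$ is used),
\begin{equation*}
N=\frac14\sum_{\beta\in\mathbb{F}_q}\bigl(1-\chi(\beta)\bigr)\bigl(1-\chi(\beta^2-1)\bigr).
\end{equation*}
Expanding produces four sums: $\sum_\beta 1=q$; the vanishing sum $\sum_\beta\chi(\beta)=0$; the standard quadratic sum $\sum_\beta\chi(\beta^2-1)=-1$ (valid because $\beta^2-1$ has nonzero discriminant); and the cross term $\sum_\beta\chi(\beta)\chi(\beta^2-1)=\sum_\beta\chi(\beta^3-\beta)=:S$. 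Thus $N=\tfrac14(q+1+S)$.

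The crux is to control $S=\sum_{\beta\in\mathbb{F}_q}\chi(\beta^3-\beta)$. I would invoke the Weil estimate for this character sum: since $q$ is odd and $q>3$, the roots $0,1,-1$ of $\beta^3-\beta$ are distinct, so $y^2=\beta^3-\beta$ defines an elliptic curve over $\mathbb{F}_q$ and the Hasse bound yields $|S|\le 2\sqrt q$. This is the single non-elementary ingredient, and I expect it to be the main obstacle — everything else is bookkeeping with the quadratic character. As an alternative closer to the cyclotomic table quoted before Lemma~\ref{lemma:1+beta}, one can instead evaluate $S$ exactly through the cyclotomic numbers of order $4$ (available because $q\equiv1\pmod4$); but the Weil estimate is cleaner and entirely sufficient here.

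Combining, $N=\tfrac14(q+1+S)\ge\tfrac14\bigl(q+1-2\sqrt q\bigr)=\tfrac14(\sqrt q-1)^2>0$, since $q>1$. As $N$ is a nonnegative integer, $N\ge1$, so some $\beta\in NQR(q)$ satisfies $(\beta+1)(\beta-1)\in NQR(q)$, proving the lemma. I would add a remark that the order-$2$ cyclotomic numbers recorded in the paper give only the marginal distributions of $\chi(\beta-1)$ and $\chi(\beta+1)$ separately and cannot by themselves resolve the joint condition on both $\beta\pm1$; this is exactly why the cubic sum $S$ (equivalently, an order-$4$ computation) cannot be avoided.
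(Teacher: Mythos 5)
Your proof is correct, but it takes a genuinely different route from the paper's. You count: with $N$ the number of $\beta\in NQR(q)$ satisfying $\beta^2-1\in NQR(q)$, your identity $N=\frac14(q+1+S)$, $S=\sum_{\beta}\chi(\beta^3-\beta)$, checks out — the terms at $\beta\in\{0,\pm1\}$ do vanish since $k>1$ forces $q\equiv1\pmod 4$ and hence $\chi(-1)=1$, the complete sum $\sum_\beta\chi(\beta^2-1)=-1$ is the standard evaluation for nonzero discriminant, and the Hasse bound for $y^2=x^3-x$ (nonsingular in all odd characteristic) gives $|S|\le2\sqrt q$, so $N\ge\frac14(\sqrt q-1)^2>0$. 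The paper argues quite differently and entirely elementarily: from Lemma~\ref{lemma:1+beta} (ultimately the order-$2$ cyclotomic number $(1,1)=f/2>0$) it obtains two consecutive non-residues, then walks to the right end $\beta^*$ of the maximal run $A_\beta$ of consecutive non-residues containing them; there $\beta^*-1\in NQR(q)$ because the run has length at least $2$, while $\beta^*+1\in QR(q)$ because the run terminates (and $\beta^*+1\neq0$ since $-1\in QR(q)$ when $q\equiv1\pmod4$), whence $(\beta^*+1)(\beta^*-1)\in NQR(q)$ by Corollary~\ref{col:mulres}. Your method buys a quantitative conclusion — roughly $q/4$ admissible $\beta$, which would be useful if one wanted to impose extra side conditions — at the price of the non-elementary Weil/Hasse input; the paper's run-end trick buys elementarity at the price of producing just one $\beta$. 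One correction: your closing remark that the cubic sum $S$ (equivalently an order-$4$ cyclotomic computation) ``cannot be avoided'' is refuted by the paper's own proof. While the order-$2$ numbers alone indeed only give marginal information, the extra combinatorial observation that the \emph{endpoint} of a maximal run of non-residues automatically realizes the mixed pattern $\chi(\beta^*-1)=-1$, $\chi(\beta^*+1)=+1$ resolves the joint condition with no input beyond the existence of a run of length at least $2$.
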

\begin{proof}
	For each $\beta\in NQR(q)$ define $A_\beta=\{a^\beta_1,\ldots,a^\beta_{l_\beta}\}$, where $a^\beta_{i}\in NQR(q)$ and $a^\beta_{i+1}=a^\beta_{i}+1$, for all $i=1, \ldots,l_{\beta}-1$. 
	By Lemma \ref{lemma:1+beta}, there exists $\beta\in NQR(q)$ such that $|A_\beta|>1$. 
	If $\beta^*=a^\beta_{l_\beta}$, then $(\beta^*+1)\in QR(q)$ and $(\beta^*-1)\in NQR(q)$. 
	On the other hand, if $\beta^*=\beta_1$, then $(\beta^*+1)\in NQR(q)$ and $(\beta^*-1)\in QR(q)$. Hence $(\beta^*+1)(\beta^*-1)\in NQR(q)$.
\end{proof}
\begin{remark}
According with the proof of Lemma \ref{lemma:completo}, if there exists $\beta\in NQR(q)$ such that $|A_\beta|=1$, then $(\beta+1)\in QR(q)$ and $(\beta-1)\in QR(q)$. Moreover, if there exists $\beta\in NQR(q)$ such that $|A_\beta|>2$, then $\beta^*=a^\beta_2$ is such that $(\beta^*+1)\in NQR(q)$ and $(\beta^*-1)\in NQR(q)$.	
\end{remark}

\begin{lema}\label{lemma:(beta+1)(beta-1)}
Let $q=2^kt+1$ be a prime power with $k>1$ a positive integer and
$t$ an odd integer greater than 1. Then given $\beta\in NQR(q)$ there exist $\beta_1,\beta_2\in\beta\hat{C}_0$, such that 	$(\beta_1-1)(\beta_2-1)\in NQR(q)$.
\end{lema}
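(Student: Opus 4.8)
The plan is to fix $\beta\in NQR(q)$ and study the coset $M:=\beta\hat{C}_0=-\beta C_0$. Since $q=2^{k}t+1$ with $k>1$ gives $q\equiv 1\bmod 4$, we have $-1\in QR(q)$ by Theorem \ref{col:menosuno}; as $C_0\subseteq QR(q)$ and $\beta\in NQR(q)$, Corollary \ref{col:mulres} yields $M\subseteq NQR(q)$, so in particular $1\notin M$ and $m-1\neq 0$ for every $m\in M$. Writing $\beta_1=m_1$, $\beta_2=m_2$ with $m_1,m_2\in M$, Corollary \ref{col:mulres} shows that $(\beta_1-1)(\beta_2-1)\in NQR(q)$ exactly when $m_1-1$ and $m_2-1$ have opposite quadratic character. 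Hence I would reduce the lemma to the assertion that the quadratic character $\chi$ (with $\chi(x)=1$ for $x\in QR(q)$ and $\chi(x)=-1$ for $x\in NQR(q)$) is \emph{not constant} on the set $\{m-1:m\in M\}$, and devote the proof to excluding the two ``monochromatic'' possibilities.

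First I would extract an exact algebraic constraint. Because $t$ is odd and $C_0=\{c:c^{t}=1\}$, the coset $M=-\beta C_0$ is precisely the set of roots of $X^{t}+\beta^{t}$, so $\prod_{m\in M}(X-m)=X^{t}+\beta^{t}$. Evaluating at $X=1$ and using $\chi(-1)=1$ gives $\prod_{m\in M}(m-1)=-(1+\beta^{t})$, hence $\prod_{m\in M}\chi(m-1)=\chi(1+\beta^{t})$ (note $\beta^{t}\in NQR(q)$, since $\chi(\beta^{t})=\chi(\beta)^{t}=-1$, so $1+\beta^{t}\neq 0$ and $\chi(1+\beta^{t})$ is defined). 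If $m\mapsto\chi(m-1)$ were constantly $+1$, the left-hand side would be $+1$, forcing $1+\beta^{t}\in QR(q)$; if it were constantly $-1$, then since $|M|=t$ is odd the left-hand side would be $-1$, forcing $1+\beta^{t}\in NQR(q)$. Thus the product identity already eliminates exactly one of the two monochromatic configurations, namely the one incompatible with the character of $1+\beta^{t}$.

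The remaining step, excluding the other monochromatic configuration by producing a single witness $m_0\in M$ of the opposite character, is where I expect the real difficulty to lie. I would control the quantity $N_-:=\#\{m\in M:m-1\in NQR(q)\}=\#\{c\in C_0:1+\beta c\in NQR(q)\}$ directly, and I see two viable routes. The counting route uses the order-$2$ cyclotomic data $(0,1)=(1,1)=\tfrac{q-1}{4}$ recalled before Lemma \ref{lemma:1+beta}: these show that exactly half of $NQR(q)$ lies in each of the shifted classes $\{x:x-1\in QR(q)\}$ and $\{x:x-1\in NQR(q)\}$, and since neither class is a union of $C_0$-cosets, no coset $M$ can be swallowed by a single class, which would give $0<N_-<t$; making ``not a union of cosets'' rigorous is the main obstacle, and it is exactly here that refined (order $2^{k}$) cyclotomy, or the analytic route below, is required. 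The analytic route writes $\sum_{m\in M}\chi(m-1)=\sum_{c\in C_0}\chi(1+\beta c)$, expands the indicator of $C_0$ over the multiplicative characters of order dividing $2^{k}$, and applies Weil's bound to the resulting Jacobi-type sums to obtain $\bigl|\sum_{m\in M}\chi(m-1)\bigr|\le \tfrac{1}{2^{k}}\bigl(1+(2^{k}-1)\sqrt{q}\,\bigr)$, which is strictly less than $t=\tfrac{q-1}{2^{k}}$ once $q$ is large relative to $2^{k}$; the finitely many small exceptional pairs $(k,t)$ would then be checked directly, consistently with the computations of Section 5. Either way, combining such a witness with the product identity of the previous paragraph forces $0<N_-<t$, so $\chi(m-1)$ takes both values on $M$ and the desired $\beta_1,\beta_2$ exist.
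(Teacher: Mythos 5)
Your reduction and your first step are correct, and the product identity is a genuinely nice observation not in the paper: since $t$ is odd, $\prod_{m\in M}(X-m)=X^{t}+\beta^{t}$ indeed gives $\prod_{m\in M}\chi(m-1)=\chi(1+\beta^{t})$, an exact constraint that pins down the parity of the number $N_-$ of $m\in M$ with $m-1\in NQR(q)$, and so kills one of the two monochromatic configurations. But the step that must kill the \emph{other} configuration --- the mathematical heart of the lemma --- is not proved, and neither of your two routes closes it. Route (a) is left explicitly non-rigorous, and making ``$\{x:x-1\in QR(q)\}$ is not a union of $C_0$-cosets'' rigorous is exactly the content of the lemma, so nothing has been gained there. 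Route (b) fails quantitatively, and your fallback claim is false: the Weil-type bound $\frac{1}{2^{k}}\bigl(1+(2^{k}-1)\sqrt{q}\bigr)<t=\frac{q-1}{2^{k}}$ is equivalent to $(2^{k}-1)\sqrt{q}<q-2$, i.e.\ roughly $t>2^{k}$. Since $q=2^{k}t+1$, this fails whenever $t$ is small compared with $2^{k}$ --- for instance for every prime power $q=3\cdot 2^{k}+1$ ($q=97,193,769,\dots$): at $q=97$ ($k=5$, $t=3$) the bound gives $\bigl|\sum_{m\in M}\chi(m-1)\bigr|\le\frac{1}{32}(1+31\sqrt{97})\approx 9.6$, while $t=3$, so it says nothing. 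These exceptions are not ``finitely many small pairs $(k,t)$'': for each fixed odd $t$, \emph{all} sufficiently large $k$ are exceptional, so the set of uncovered fields is infinite and no finite direct check can complete the argument.

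The paper's own proof avoids estimates entirely and is uniform in $(k,t)$. It writes $\chi(\beta_1-1)=(\beta_1-1)^{\Delta t}$ by Euler's criterion (with $\Delta=2^{k-1}$, $\Delta t=\frac{q-1}{2}$), so that constancy on the coset $A_0=\beta\hat{C}_0$ would make $\sum_{\beta_1\in A_0}(\beta_1-1)^{\Delta t}=\pm t$ in $\mathbb{F}_q$; it then expands $(\beta x+1)^{\Delta t}$ binomially and uses the exact vanishing $\sum_{x\in C_0}x^{i}=0$ unless $t\mid i$ to turn this into the identity $\pm1=\sum_{s=0}^{\Delta}\binom{\Delta t}{st}\beta^{st}$, and finally sums over all $\beta\in NQR(q)$, where the power sums $\sum_{\beta}\beta^{st}$ equal $\Delta t$, $0$, $-\Delta t$ for $s=0$, $0<s<\Delta$, $s=\Delta$ respectively, forcing $\Delta t=0$ in $\mathbb{F}_q$ --- a contradiction since $\Delta t=\frac{q-1}{2}\neq 0$. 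Your product identity is in effect the multiplicative shadow of this computation: it recovers only the product $\prod_{m\in M}\chi(m-1)$, whereas the paper's binomial computation controls the full sum $\sum_{m\in M}\chi(m-1)$, which is what non-constancy requires. So the gap is real: the witness-producing half of your argument is missing, and the proposed analytic patch provably cannot cover infinitely many of the fields the lemma is about.
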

\begin{proof}
Suppose that given $\beta_1\in\beta\hat{C}_0$ is such that $(\beta_1-1)\in QR(q)$; the same argument is used if we suppose that $(\beta_1-1)\in NQR(q)$. Hence $(\beta_1-1)^{\frac{q-1}{2}}=(\beta_1-1)^{\Delta t}=1$. Let $A_0=\beta\hat{C}_0$, then
	\begin{eqnarray*}
		t&=&\displaystyle\sum_{\beta_1\in A_0}(\beta_1-1)^{\Delta t}=\displaystyle\sum_{x\in C_0}(-\beta x-1)^{\Delta t}\\
		&=&\displaystyle\sum_{x\in C_0}(\beta x+1)^{\Delta t}=\displaystyle\sum_{x\in C_0}\displaystyle\sum_{i=0}^{\Delta t}\binom {\Delta t} {i}\beta^ix^i\\
		&=&\sum_{i=0}^{\Delta t}\binom {\Delta t} {i}\beta^i\displaystyle\sum_{x\in C_j}x^i,
	\end{eqnarray*}
	where
	\begin{equation*}
	\displaystyle\sum_{x\in C_j}x^i=
	\left\{
	\begin{array}{ll}
	t & \hbox{if $i=0,t,\ldots,\Delta t$,}\\
	0 & \hbox{otherwise.}
	\end{array}
	\right.
	\end{equation*}
	
	Then $$1=\sum_{s=0}^{\Delta}\binom {\Delta t} {st}\beta^{st}, \mbox{ for all $\beta\in NQR(q)$}.$$
	
	Hence, if $NQR(q)=\{\beta_1,\ldots,\beta_{\Delta t}\}$, then
	$$\Delta t=\sum_{s=0}^{\Delta}\binom {\Delta t} {st}\sum_{k=1}^{\Delta t}\beta_k^{st}$$ Since
	\begin{equation*}
	\displaystyle\sum_{k=1}^{\Delta t}\beta^{s t}_k=
	\left\{
	\begin{array}{ll}
	\Delta t & \hbox{if $s=0$,}\\
	-\Delta t & \hbox{if $s=\Delta$,}\\
	0 & \hbox{otherwise,}
	\end{array}
	\right.
	\end{equation*}then $$\Delta t=\sum_{s=0}^{\Delta}\binom {\Delta t} {st}\sum_{k=1}^{\Delta t}\beta_k^{st}=0,$$which it is a contradiction. Furthermore, there exist $\beta_1,\beta_2\in \beta\hat{C}_0$, such that $(\beta_1-1)(\beta_2-1)\in NQR(q)$.
\end{proof}

\begin{coro}\label{coro:(beta+1)(beta-1)}
Let $q=2^kt+1$ be a prime power with $k>1$ a positive integer and
$t$ an odd integer greater than 1. Then given $\beta\in NQR(q)$ there exist $\beta_1,\beta_2\in\beta\hat{C}_0$ such that 	$(\beta_1+1)(\beta_2+1)\in NQR(q)$.
\end{coro}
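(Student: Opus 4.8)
The plan is to derive this corollary from Lemma~\ref{lemma:(beta+1)(beta-1)} by the substitution $x\mapsto -x$, taking advantage of the fact that $-1$ is a quadratic residue in our setting. I would first record the standing observation that since $q=2^kt+1$ with $k>1$ we have $4\mid q-1$, so $q\equiv 1\pmod 4$ and hence $-1\in QR(q)$ by Theorem~\ref{col:menosuno}. Consequently, for any $\beta\in NQR(q)$, Corollary~\ref{col:mulres} gives $-\beta\in NQR(q)$ as well.

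The key step is then to apply Lemma~\ref{lemma:(beta+1)(beta-1)} not to $\beta$ itself but to $-\beta$. This yields elements $\gamma_1,\gamma_2\in(-\beta)\hat{C}_0$ with $(\gamma_1-1)(\gamma_2-1)\in NQR(q)$. The bookkeeping required is that $\hat{C}_0=-C_0$, so $(-\beta)\hat{C}_0=(-\beta)(-C_0)=\beta C_0$; thus in fact $\gamma_1,\gamma_2\in\beta C_0$.

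Next I would set $\beta_i=-\gamma_i$ for $i=1,2$. On the one hand $\beta_i\in-\beta C_0=\beta(-C_0)=\beta\hat{C}_0$, so these elements lie in the coset demanded by the statement. On the other hand $\beta_i+1=-(\gamma_i-1)$, and therefore $(\beta_1+1)(\beta_2+1)=(\gamma_1-1)(\gamma_2-1)\in NQR(q)$, which is precisely the desired conclusion.

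I expect the only genuine obstacle to be the coset arithmetic, namely checking that negation carries $\beta\hat{C}_0$ onto $\beta C_0$ and back, together with the sign bookkeeping on $-1$; once the $x\mapsto -x$ symmetry is in place the rest is immediate. As an alternative that parallels the proof of Lemma~\ref{lemma:(beta+1)(beta-1)} directly, one could rerun the cyclotomic sum with $(\beta_1+1)^{\Delta t}$ in place of $(\beta_1-1)^{\Delta t}$: writing $\beta_1=-\beta x$ with $x\in C_0$ gives $(\beta_1+1)^{\Delta t}=(\beta x-1)^{\Delta t}$, and because $\Delta t$ is even and $t$ is odd the extra signs collect into a factor $(-1)^s$ which equals $1$ at the only surviving indices $s=0$ and $s=\Delta$ (here $\Delta=2^{k-1}$ is even), so the same contradiction $\Delta t=0$ appears. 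Since the reduction above is shorter and avoids repeating the computation, that is the route I would present.
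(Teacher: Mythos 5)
Your proof is correct, and its main route is genuinely different from the one the paper implicitly intends. The paper states this result immediately after Lemma~\ref{lemma:(beta+1)(beta-1)} with no written proof, the understood argument being to repeat that lemma's cyclotomic computation with $(\beta_1+1)^{\Delta t}$ in place of $(\beta_1-1)^{\Delta t}$; that is exactly your alternative sketch, and your sign bookkeeping there is accurate: writing $\beta_1=-\beta x$ with $x\in C_0$ gives $(\beta_1+1)^{\Delta t}=(\beta x-1)^{\Delta t}$ since $\Delta t$ is even, the surviving exponents $i=st$ carry $(-1)^{st}=(-1)^s$ because $t$ is odd, and the sign at $s=\Delta$ is $(-1)^{\Delta}=1$ because $\Delta=2^{k-1}$ is even for $k>1$, so the same contradiction $\Delta t=0$ goes through. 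Your primary route instead derives the corollary from the lemma as a black box: $k>1$ forces $q\equiv1\pmod{4}$, hence $-1\in QR(q)$ by Theorem~\ref{col:menosuno} and $-\beta\in NQR(q)$ by Corollary~\ref{col:mulres}; applying Lemma~\ref{lemma:(beta+1)(beta-1)} to $-\beta$ yields $\gamma_1,\gamma_2\in(-\beta)\hat{C}_0=(-\beta)(-C_0)=\beta C_0$ with $(\gamma_1-1)(\gamma_2-1)\in NQR(q)$, and $\beta_i=-\gamma_i\in\beta\hat{C}_0$ then satisfies $(\beta_1+1)(\beta_2+1)=(\gamma_1-1)(\gamma_2-1)\in NQR(q)$. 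The coset arithmetic is sound, and distinctness of $\beta_1,\beta_2$ is automatic in either approach, since $\beta_1=\beta_2$ would make the product a square. What your reduction buys is that it is shorter, it makes the word ``corollary'' literal rather than meaning ``analogous lemma,'' and it inherits exactly the strength of Lemma~\ref{lemma:(beta+1)(beta-1)}, so any repair or sharpening of that lemma's proof (whose written version is a little loose in passing from a fixed $\beta$ to all $\beta\in NQR(q)$) transfers to the corollary with no further work; the rerun-the-computation route, by contrast, duplicates both the computation and that soft spot.
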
	

\section{Proof of the main theorem}\label{sec:main}

With the results presented before, we are ready to prove the main theorem, Theorem \ref{thm:main}, of this paper:

\begin{proof}
Let $\alpha$ be a generator of $QR(q)$, $\beta_1\in NQR(q)$ and $\beta_2\in\beta_1\hat{C}_0$ such that $(\beta_1-1)(\beta_2+1)\in NQR(q)$ and $(\beta_1+1)(\beta_2-1)\in NQR(q)$ (by Lemma \ref{coro:completo}). We claim that the following set
$S(\beta_1,\beta_2)=\bigcup_{i=0}^{\Delta_1-1}S(\beta_1,\beta_2)_j$ is a strong starter for $\mathbb{F}_q$,where $S(\beta_1,\beta_2)_{j}=\{\{x,\beta_1x\},\{y,-\beta_2y\}:x\in C_j,y\in\hat{C}_j\}$, for $j=0,\ldots,\Delta_1-1$. We have that $QR(q)=\bigcup_{j=0}^{\Delta_1-1}\{\{x\}\cup\{y\}:x\in C_j,y\in\hat{C}_j\}$ and $NQR(q)=\bigcup_{j=0}^{\Delta_1-1}\{\{\beta_1x\}\cup\{-\beta_2y\}:x\in C_j,y\in\hat{C}_j\}$. Moreover, if $\{a,b\}\in S(\beta_1,\beta_2)$, then $a/b\in\{\beta_1,-\beta_2\}$ or $b/a\in\{\beta_1,-\beta_2\}$, which impliest that $S(\beta_1,\beta_2)$ is a two-quotient strong starter for $\mathbb{F}_q$. 

First we shall prove that $\mathbb{F}_q^*=\bigcup_{j=0}^{\Delta_1-1} (E_j\cup E^*_j)$, where
	\begin{center}
		$E_j=\left\{\pm x(\beta_1-1):x\in C_j\right\}$
		
		\vspace{.2cm}
		
		$E_j^*=\left\{\pm y(\beta_2+1):y\in \hat{C}_j\right\}$
	\end{center}
	for all $j=0,\ldots, \Delta_1-1$.
	\begin{itemize}
		\item [Case (i):] If $x_j(\beta_1-1)=\pm x_j^\prime(\beta_1-1)$, for $x_j\neq x_j^\prime\in C_j$,	
		then $(\beta_1-1)(x_j\pm x_j^\prime)=0$, which it is a contradiction, since $x_j+x_j^\prime\neq0$.
		\item [Case (ii):] If $x_j(\beta_1-1)=\pm x_i(\beta_1-1)$,
		for $x_j\in C_j$ and $x_i\in C_i$, then\\ $(\beta_1-1)(x_j\pm x_i)=0$, which it is a contradiction, since  $C_j\cap C_i=\emptyset$, for $i\neq j\in\{0,\ldots,\Delta_1-1\}$ and $x_j+x_i\neq0$.
		\item [Case (iii):] If $y_j(\beta_2+1)=\pm y_j^\prime(\beta_2+1)$, for $y_j\neq y_j^\prime\in \hat{C}_j$, then $(\beta_2+1)(y_j\pm y^\prime_j)=0$, which it is a contradiction, since $y_j+y^\prime_j\neq0$.
		\item [Case (iv):] If $y_j(\beta_2+1)=\pm y_i(\beta_2+1)$, for $y_j\in \hat{C}_j$ and $y_i\in \hat{C}_i$, then\\ $(\beta_2+1)(y_j\pm y_i)=0$, which it is a contradiction, since  $\hat{C}_j\cap \hat{C}_i=\emptyset$, for $i\neq j\in\{0,\ldots,\Delta_1-1\}$ and $y_j+y_i\neq0$.
		\item [Case (v):] As $(\beta_1-1)(\beta_2+1)\in NQR(q)$ then $x_j(\beta_1-1)\neq\pm y_i(\beta_2+1)$, for $x_j\in C_j$ and $y_i\in \hat{C}_i$, since either $x_j(\beta_1-1)\in NQR(q)$ and $y_i(\beta_2+1)\in QR(q)$ or $x_j(\beta_1-1)\in QR(q)$ and $y_i(\beta_2+1)\in NQR(q)$.
		
	\end{itemize}
	
	Now, we shall prove that 
	$\left|\displaystyle\bigcup_{j=0}^{\Delta_1-1}(P_j\cup P_j^*)\right|=\Delta t$, where
	\begin{center}
		$P_j=\left\{x(\beta_1+1):x\in C_j\right\}$
		
		$P_j^*=\left\{-y(\beta_2-1):y\in \hat{C}_j\right\}$
	\end{center}
	for all $j=0,\ldots,\Delta_1-1$.
	
	\begin{itemize}
		\item [Case (i):] If $x_j(\beta_1+1)=x_j^\prime(\beta_1+1)$, for $x_j\neq x_j^\prime\in C_j$, then $(\beta_1+1)(x_j-x_i)=0$, which it is a contradiction.
		\item [Case (ii):] If $x_j(\beta_1+1)=x_i(\beta_1+1)$, for $x_j\in C_j$ and $x_i\in C_i$, then\\ $(\beta_1+1)(x_j-x_i)=0$, which it is a contradiction, since $C_i\cap C_j=\emptyset$, for $i\neq j\in\{0,\ldots,\Delta_1-1\}$.
		\item [Case (iii):] If $-y_j(\beta_2-1)=-y^\prime_j(\beta_2-1)$, for $y_j\neq y_j^\prime\in \hat{C}_j$, then $(\beta_2-1)(y_j-y^\prime_j)=0$, which it is a contradiction.
		\item [Case (iv):] If $-y_j(\beta_2-1)=-y_i(\beta_2-1)$, for $y_j\in \hat{C}_j$ and $y_i\in \hat{C}_i$, then\\ $(\beta_2-1)(y_j-y_i)=0$, which it is a contradiction, since $\hat{C}_j\cap \hat{C}_i\emptyset$, for $i\neq j\in\{0,\ldots,\Delta_1-1\}$.
		\item [Case (v):] As $(\beta_1+1)(\beta_2-1)\in NQR(q)$, then $x_j(\beta_1+1)\neq -y_i(\beta_2-1)$, for $x_j\in C_j$ and $y_i\in \hat{C}_i$, since either $x_j(\beta_1+1)\in NQR(q)$ and $-y_i(\beta_2-1)\in QR(q)$ or $x_j(\beta_1+1)\in QR(q)$ and $-y_i(\beta_2-1)\in NQR(q)$.		
	\end{itemize}

To end,	it is not difficult to prove that $\beta_1\neq\beta_2$, since if $\beta_1=-\beta_1x$, for some $x\in C_0$, then $\beta_1(1+x)=0$, which is a contradiction, since $-1\in \hat{C}_0$.
\end{proof}

\begin{coro}\label{coro:betas}
If $S(\beta_1,\beta_2)$ is a two-quotient strong starter for $\mathbb{F}_q$ given by Theorem \ref{thm:main}, then $S(\beta_2,\beta_1)$, $S(-\beta_1,-\beta_2)$ and $S(-\beta_2,-\beta_1)$ are
two-quotient strong starters for $\mathbb{F}_q$ different from $S(\beta_1,\beta_2)$.	
\end{coro}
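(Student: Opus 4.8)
The final statement to prove is Corollary~\ref{coro:betas}, which asserts that from the two-quotient strong starter $S(\beta_1,\beta_2)$ produced by Theorem~\ref{thm:main}, three further starters $S(\beta_2,\beta_1)$, $S(-\beta_1,-\beta_2)$ and $S(-\beta_2,-\beta_1)$ are also two-quotient strong starters, and that all four are distinct. The plan is to verify, for each of the three new pairs, that the hypotheses of Theorem~\ref{thm:main} are satisfied, namely that the swapped or negated first coordinate still lies in $NQR(q)$, that the second coordinate lies in the appropriate coset $\hat{C}_0$-translate of the first, and that the two defining products land in $NQR(q)$; then to show the four starters are genuinely different as sets.

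First I would treat $S(\beta_2,\beta_1)$. Since $\beta_2\in\beta_1\hat{C}_0$ and $\hat{C}_0=-C_0$ is closed under inversion within the relevant coset structure, one checks $\beta_1\in\beta_2\hat{C}_0$, so the roles of $\beta_1,\beta_2$ may be exchanged. The two required products for the pair $(\beta_2,\beta_1)$ are $(\beta_2-1)(\beta_1+1)$ and $(\beta_2+1)(\beta_1-1)$, which are exactly the two products $(\beta_1+1)(\beta_2-1)$ and $(\beta_1-1)(\beta_2+1)$ already assumed to lie in $NQR(q)$ by Lemma~\ref{coro:completo}; multiplication in $\mathbb{F}_q^*$ is commutative, so the hypotheses transfer verbatim. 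Hence $S(\beta_2,\beta_1)$ is a two-quotient strong starter by Theorem~\ref{thm:main}.

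Next I would handle the negated pairs using Theorem~\ref{inverso}, i.e.\ that for $q\equiv1\bmod 4$ (which holds here since $q=2^kt+1$ with $k>1$) negation preserves both $QR(q)$ and $NQR(q)$. Thus $-\beta_1,-\beta_2\in NQR(q)$, and $-\beta_2\in(-\beta_1)\hat{C}_0$ follows from $\beta_2\in\beta_1\hat{C}_0$ by multiplying through by $-1$ and noting $\hat{C}_0$ is the relevant coset. For $S(-\beta_1,-\beta_2)$ the defining products are $(-\beta_1-1)(-\beta_2+1)=(\beta_1+1)(\beta_2-1)$ and $(-\beta_1+1)(-\beta_2-1)=(\beta_1-1)(\beta_2+1)$, again the same two elements of $NQR(q)$. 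The pair $(-\beta_2,-\beta_1)$ is treated by combining the swap and negation arguments. So all three satisfy the theorem's hypotheses and are two-quotient strong starters.

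The main obstacle, and the only genuinely new content, is distinctness of the four starters; the preceding hypothesis checks are purely formal. Here I would compare the pairs appearing in each starter. Recall $S(\beta_1,\beta_2)$ pairs each $x\in C_j$ with $\beta_1 x\in NQR(q)$ and each $y\in\hat{C}_j$ with $-\beta_2 y\in NQR(q)$; thus the \emph{quotients} realized are $\beta_1$ and $-\beta_2$. The starter $S(\beta_2,\beta_1)$ realizes quotients $\beta_2$ and $-\beta_1$, $S(-\beta_1,-\beta_2)$ realizes $-\beta_1$ and $\beta_2$, and $S(-\beta_2,-\beta_1)$ realizes $-\beta_2$ and $\beta_1$. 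Comparing these quotient sets $\{\beta_1,-\beta_2\}$, $\{\beta_2,-\beta_1\}$, $\{-\beta_1,\beta_2\}$, $\{-\beta_2,\beta_1\}$, distinctness reduces to showing that the four values $\beta_1,\beta_2,-\beta_1,-\beta_2$ are pairwise distinct, equivalently $\beta_1\neq\pm\beta_2$ and $\beta_1\neq-\beta_1$ (the latter automatic since the characteristic is odd). The inequality $\beta_1\neq\beta_2$ is already established at the end of the proof of Theorem~\ref{thm:main} via the observation that $\beta_1=-\beta_1 x$ for $x\in C_0$ would force $1+x=0$ with $-1\in\hat{C}_0$, a contradiction; and $\beta_1\neq-\beta_2$ follows by the same coset argument, since $\beta_2\in\beta_1\hat{C}_0$ means $\beta_2=-\beta_1 x$ for some $x\in C_0$, so $-\beta_2=\beta_1 x$ with $x\in C_0$, whereas $\beta_1$ corresponds to $x=1\in C_0$ in its own coset, forcing equality of cosets only if $\beta_1 x=\beta_1$, i.e.\ $x=1$, which contradicts $\beta_2\in\beta_1\hat{C}_0$ being a distinct non-quadratic residue. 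Once these two inequalities are in hand, the four quotient-sets are distinct, hence the four starters are distinct as sets, completing the proof.
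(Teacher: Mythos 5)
Your verification that the three new pairs satisfy the hypotheses of Theorem~\ref{thm:main} is correct: $\beta_1\in\beta_2\hat{C}_0$ follows because $C_0$ is a subgroup (if $\beta_2=-\beta_1c$ with $c\in C_0$, then $\beta_1=\beta_2(-c^{-1})\in\beta_2\hat{C}_0$), negation preserves $NQR(q)$ since $q\equiv1\bmod4$, and in each case the two defining products reduce, by commutativity and $(-u)(-v)=uv$, to the two elements $(\beta_1-1)(\beta_2+1)$ and $(\beta_1+1)(\beta_2-1)$ already in $NQR(q)$. (The paper states the corollary without proof, so this formal transfer of hypotheses is presumably all that was intended for the first half.)

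However, your distinctness argument has a genuine flaw. You claim the four quotient-sets $\{\beta_1,-\beta_2\}$, $\{\beta_2,-\beta_1\}$, $\{-\beta_1,\beta_2\}$, $\{-\beta_2,\beta_1\}$ are distinct, but as \emph{sets} the first and fourth coincide, as do the second and third; so quotient-sets cannot distinguish $S(\beta_1,\beta_2)$ from $S(-\beta_2,-\beta_1)$, which is precisely one of the comparisons the corollary requires. What distinguishes them is that the quotients are attached to different cosets: in $S(\beta_1,\beta_2)$ the pair containing $1\in C_0$ is $\{1,\beta_1\}$, while in $S(-\beta_2,-\beta_1)$ it is $\{1,-\beta_2\}$ (and in $S(\beta_2,\beta_1)$, $S(-\beta_1,-\beta_2)$ it is $\{1,\beta_2\}$, $\{1,-\beta_1\}$ respectively), so distinctness does reduce to $\beta_1\notin\{\beta_2,-\beta_1,-\beta_2\}$ --- but via this pointwise comparison, not via quotient-sets. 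Moreover, your argument for $\beta_1\neq-\beta_2$ is circular: $x=1$ gives $\beta_2=-\beta_1$, which is a perfectly admissible element of $\beta_1\hat{C}_0$ and contradicts nothing about membership in $NQR(q)$. The correct route uses the product hypotheses: if $\beta_2=-\beta_1$, then $(\beta_1-1)(\beta_2+1)=-(\beta_1-1)^2\in QR(q)$, since $-1\in QR(q)$ for $q\equiv1\bmod4$ by Theorem~\ref{col:menosuno}, contradicting $(\beta_1-1)(\beta_2+1)\in NQR(q)$ from Lemma~\ref{coro:completo}. With that inequality, together with $\beta_1\neq\beta_2$ (established at the end of the proof of Theorem~\ref{thm:main}) and $\beta_1\neq-\beta_1$ (odd characteristic), the pointwise comparison above completes the proof.
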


Let $q=4t+1$ be a prime power with an odd integer greater than 1,  $C_0\subseteq\mathbb{F}_q^*$ be the subgroup of order $t$ and $C_0,C_1,C_2,C_3$ be the multiplicative cosets of $C_0$. In \cite{dinitz1984} was proven then the following set
$$\hat{S}(a_0,a_1)=\{\{x,a_0x\},\{y,a_1y\}:x\in C_0^{a_0},y\in \hat{C}_1^{a_1}\},$$where $C_0^{a_0}=1/(a_0-1)C_0$ and $C_1^{a_1}=1/(a_1-1)C_1$, is a two-quotient strong starter for $\mathbb{F}_q$. Hence if $\beta_1\in NQR(q)$ and $\beta_2\in\beta_1\hat{C}_0$ are such that $(\beta_1-1)(\beta_2+1)\in NQR(q)$, $(\beta_1+1)(\beta_2-1)\in NQR(q)$ and $(\beta_0-1)\in C_0$ and $-(\beta_1-1)\in \hat{C}_0$, then
\begin{eqnarray*}
\hat{S}(\beta_0,-\beta_1)&=&\{\{x,\beta_0x\},\{y,-\beta_1y\}:x\in C_0^{\beta_0},y\in\hat{C}_0^{\beta_1}\}\\
&=&\{\{x,\beta_0x\},\{y,-\beta_1y\}:x\in C_0,y\in \hat{C}_0\}\\
&=&S(\beta_0,\beta_1).
\end{eqnarray*} 
\subsection{Examples}
In this subsection we give examples of strong starters for $\mathbb{F}(29)$ and $\mathbb{F}(41)$ given by Theorem \ref{thm:main} (main theorem).

Let $\mathbb{F}_{29}=\mathbb{Z}_{29}$, then $k=2$, $t=7$, $\Delta=2$, $\Delta_1=0$ and $\alpha=4$. We have
\begin{eqnarray*}
	QR(29)&=&\{1,4,5,6,7,9,13,16,20,22,23,24,25,28\}\mbox{ and}\\
	NQR(29)&=&\{2,3,8,10,11,12,14,15,17,18,19,21,26,27\}.
\end{eqnarray*}
Hence $C_0=\{1,7,16,20,23,24,25\}$ and $\hat{C}_0=\{4,5,6,9,13,22,28\}$. If $\beta_1=2$ and $\beta_2=26\in\beta_1\hat{C}_0=\{2,10,11,15,17,21,26\}$, then $(\beta_1-1)(\beta_2+1)\in NQR(29)$ and $(\beta_1+1)(\beta_2-1)\in NQR(29)$. Therefore $S(2,26)=S(2,26)_0$, where
\begin{eqnarray*}
	S(2,26)_0&=&\{\{16,3\},\{13,10\}\}\cup\{\{24,19\},\{5,15\}\}\cup\{\{7,14\},\{22,8\}\}\\
	&\cup&\{\{25,21\},\{4,12\}\}\cup\{\{23,17\},\{6,18\}\}\\
	&\cup&\{\{20,11\},\{9,27\}\}\cup\{\{1,2\},\{28,26\}\}
\end{eqnarray*}
is a two-quotient strong starter for $\mathbb{F}_{29}$. Moreover, by Corollary \ref{coro:betas}, we see that $S(26,2)$, $S(27,3)$ and $S(3,27)$ are two-quotient strong starters in $\mathbb{F}_{29}$ different from $S(2,26)$: 
\begin{itemize}
	\item $S(26,2)=S(26,1)_0$, where
	\begin{eqnarray*}
		S(26,1)_0&=&\{\{16,10\},\{13,3\}\}\cup\{\{24,15\},\{5,19\}\}\cup\{\{7,8\},\{22,14\}\}\\
		&\cup&\{\{25,12\},\{4,21\}\}\cup\{\{23,18\},\{6,17\}\}\\
		&\cup&\{\{20,27\},\{9,11\}\}\cup\{\{1,26\},\{28,2\}\}
	\end{eqnarray*}	
	\item $S(27,3)=S(27,3)_0$, where
	\begin{eqnarray*}
		S(27,3)_0&=&\{\{16,26\},\{13,19\}\}\cup\{\{24,10\},\{5,14\}\}\cup\{\{7,15\},\{22,21\}\}\\
		&\cup&\{\{25,8\},\{4,17\}\}\cup\{\{23,12\},\{6,11\}\}\\
		&\cup&\{\{20,18\},\{9,2\}\}\cup\{\{1,27\},\{28,3\}\}\\
	\end{eqnarray*}	
	\item $S(3,27)=S(3,27)_0$, where
	\begin{eqnarray*}
		S(3,27)_0&=&\{\{16,19\},\{13,26\}\}\cup\{\{24,14\},\{5,10\}\}\cup\{\{7,21\},\{22,15\}\}\\
		&\cup&\{\{25,17\},\{4,8\}\}\cup\{\{23,11\},\{6,12\}\}\\
		&\cup&\{\{20,2\},\{9,18\}\}\cup\{\{1,3\},\{28,27\}\}\\
	\end{eqnarray*}
\end{itemize}
It can be verified that all of the starters in the following table are indeed two-quotient strong starters for $\mathbb{F}_{29}$:

\begin{table}[h!]
	\centering 
	\begin{tabular}{|c c c c|} 
		\hline
		$S(\beta_1,\beta_2)$ &	$S(\beta_2,\beta_1)$ & $S(-\beta_1,-\beta_2)$ & $S(-\beta_2,-\beta_1)$ \\ 
		[0.5ex] 
		\hline 
		$S(2,26)$ & $S(26,2)$ & $S(27,3)$  & $S(3,27)$ \\ 
		$S(2,10)$ & $S(10,2)$ & $S(27,19)$ & $S(19,27)$ \\
		$S(3,15)$ & $S(15,3)$ & $S(26,14)$ & $S(14,26)$ \\ 
		$S(3,12)$ & $S(12,3)$ & $S(26,17)$ & $S(17,26)$ \\
		$S(8,11)$ & $S(11,8)$ & $S(21,18)$ & $S(18,21)$ \\
		$S(10,14)$& $S(14,10)$& $S(19,15)$ & $S(15,19)$ \\
		$S(10,17)$& $S(17,10)$& $S(19,12)$ & $S(12,19)$\\
		\hline 
	\end{tabular}
\end{table}	 

As a second example, let $\mathbb{F}_{41}=\mathbb{Z}_{41}$, then $k=3$, $t=5$, $\Delta=4$, $\Delta_1=2$ and  $\alpha=36$. We have
\begin{eqnarray*}
	QR(41)&=&\{1,2,4,5,8,9,10,16,18,20,21,23,25,31,32,33,36,37,39,40\},\mbox{ and}\\
	NQR(41)&=&\{3,6,7,11,12,13,14,15,17,19,22,24,26,27,28,29,30,34,35,38\}.
\end{eqnarray*}
Hence $C_0=\{10,18,16,37,1\}$, $\hat{C}_0=\{31,23,25,4,40\}$, 
$C_1=\{32,33,2,20,36\}$ and $\hat{C}_1=\{9,8,39,21,5\}$.  $\beta_1=3$ then $\beta_2=12\in\beta_1\hat{C}_0=\{11,28,34,12,38\}$ is such that $(\beta_1-1)(\beta_2+1)\in NQR(41)$ and $(\beta_1+1)(\beta_2-1)\in NQR(41)$.Therefore $S(3,12)=S(3,12)_0\cup S(3,12)_1$, where
\begin{eqnarray*}
S(3,12)_0&=&\{\{10,30\},\{31,38\}\}\cup\{\{18,13\},\{23,11\}\}\cup\{\{16,7\},\{25,28\}\}\\
&\cup&\{\{37,29\},\{4,34\}\}\cup\{\{1,3\},\{40,12\}\}\\
S(3,12)_1&=&\{\{32,14\},\{9,15\}\}\cup\{\{33,17\},\{8,27\}\},\{\{2,6\},\{39,24\}\}\\
&\cup&\{\{20,19\},\{21,35\}\}\cup\{\{36,26\},\{5,22\}\}
\end{eqnarray*}	
is a two-quotient strong starter for $\mathbb{F}_{41}$.
Moreover, by Corollary 4.8, we see that $S(12,3)$, $S(38,29)$ and $S(29,38)$ are two-quotient strong starters in $\mathbb{F}_{41}$ different from $S(3,12)$:
\begin{itemize}
	\item $S(12,3)=S(12,3)_0\cup S(12,3)_1$, where
	\begin{eqnarray*}
		S(12,3)_0&=&\{\{10,38\},\{31,30\}\}\cup\{\{18,11\},\{23,13\}\}\cup\{\{16,28\},\{25,7\}\}\\
		&\cup&\{\{37,34\},\{4,29\}\}\cup\{\{1,12\},\{40,3\}\}\\
		S(12,3)_1&=&\{\{32,15\},\{9,14\}\}\cup\{\{33,27\},\{8,17\}\}\cup\{\{2,24\},\{39,6\}\}\\
		&\cup&\{\{20,35\},\{21,19\}\}\cup\{\{36,22\},\{5,26\}\}\\
	\end{eqnarray*}	
	\item $S(38,29)=S(38,29)_0\cup S(38,29)_1$, where
	\begin{eqnarray*}
		S(38,29)_0&=&\{\{10,11\},\{31,3\}\}\cup\{\{18,28\},\{23,30\}\}\cup\{\{16,34\},\{25,13\}\}\\
		&\cup&\{\{37,12\},\{4,7\}\}\cup\{\{1,38\},\{40,29\}\}\\
		S(38,29)_1&=&\{\{32,27\},\{9,26\}\}\cup\{\{33,24\},\{8,14\}\}\cup\{\{2,35\},\{39,17\}\}\\
		&\cup&\{\{20,22\},\{21,6\}\}\cup\{\{36,15\},\{5,19\}\}\\
	\end{eqnarray*}	
	\item $S(29,38)=S(29,38)_0\cup S(29,38)_1$, where
	\begin{eqnarray*}
		S(28,39)_0&=&\{\{10,3\},\{31,11\}\}\cup\{\{18,30\},\{23,28\}\}\cup\{\{16,13\},\{25,34\}\}\\
		&\cup&\{\{37,7\},\{4,12\}\}\cup\{\{1,29\},\{40,38\}\}\\
		S(28,39)_1&=&\{\{32,26\},\{9,27\}\}\cup\{\{33,14\},\{8,24\}\}\cup\{\{2,17\},\{39,35\}\}\\
		&\cup&\{\{20,6\},\{21,22\}\}\cup\{\{36,19\},\{5,15\}\}\\
	\end{eqnarray*}			
\end{itemize}
It can be verified that all of the starters in the following table are indeed two-quotient strong starters for $\mathbb{F}_{41}$:
\begin{table}[h!]
	\centering 
	\begin{tabular}{|c c c c|} 
		\hline
		$S(\beta_1,\beta_2)$ &	$S(\beta_2,\beta_1)$ & $S(-\beta_1,-\beta_2)$ & $S(-\beta_2,-\beta_1)$ \\ 
		[0.5ex] 
		\hline 
		$S(3,12)$ & $S(12,3)$ & $S(38,29)$  & $S(29,38)$ \\ 
		$S(3,28)$ & $S(28,3)$ & $S(38,13)$  & $S(13,38)$ \\
		$S(14,24)$& $S(24,14)$& $S(27,17)$  & $S(17,27)$ \\
		$S(14,22)$& $S(22,14)$& $S(27,19)$ & $S(19,27)$\\
		\hline 
	\end{tabular}
\end{table}	 

\

{\bf Acknowledgment}

\

The authors thank the referee for many constructive suggestions to improve
this paper.

C. R. research supported in part by a CONACyT-M{\' e}xico Postdoctoral fe\-llowship and in part by the National scholarship programme of the Slovak republic. C. A. and A.V. supported by SNI and CONACyT.

\bibliographystyle{amsplain}

\end{document}